\theoremstyle{plain}
\newtheorem{corollary}{Corollary}
\newtheorem{lemma}{Lemma}
\newtheorem{remark}{Remark}
\newtheorem{theorem}{Theorem}
\numberwithin{equation}{section}
\newcommand{\impl}{\Rightarrow}
\newcommand{\beq}{\begin{equation}}
\newcommand{\eeq}{\end{equation}}
\newcommand{\beqs}{\begin{eqnarray*}}
\newcommand{\eeqs}{\end{eqnarray*}}
\newcommand{\beqn}{\begin{eqnarray}}
\newcommand{\eeqn}{\end{eqnarray}}
\newcommand{\beqa}{\begin{array}}
\newcommand{\eeqa}{\end{array}}
\newcommand{\deb}{\rightharpoonup}
\def\ve{\varepsilon}
\def\eps{\epsilon}
\def\eps{\varepsilon}
\def\R{\Bbb R}
\begin{document}

\title{Regularity in Monge's mass transfer problem}

\author{Qi-Rui Li}
\address{Centre for Mathematics and Its Applications, Australian National
University, Canberra, ACT 0200, Australia.}
\email{qi-rui.li@anu.edu.au}

\author{Filippo Santambrogio}
\address{Laboratoire de Mathematiques d'Orsay,
Universit\'e Paris-Sud,
91405 Orsay cedex,
France}
\email{filippo.santambrogio@math.u-psud.fr}

\author{Xu-Jia Wang}
\address{Centre for Mathematics and Its Applications, Australian National
University, Canberra, ACT 0200, Australia.}
\email{xu-jia.wang@anu.edu.au}

\subjclass[2010]{35J60, 35B65, 49Q20.}

\keywords{Optimal transportation, Regularity}

\thanks{ The first author was also supported by the Chinese Scholarship
Council, the second by the French ANR Project ISOTACE : ANR-12-MONU-0013, the third by the Australian Research Council.}

\begin{abstract}
In this paper, we study the regularity of optimal mappings in Monge's mass
transfer problem. Using the approximation to Monge's cost function $c(x,y)=|x-y|$ 
through the costs $c_{\varepsilon }(x,y)=\sqrt{\varepsilon^{2}+|x-y|^{2}}$, we consider the optimal mappings $T_\eps$ for these costs, and
we prove that the eigenvalues of the Jacobian matrix $DT_\eps$, which are all positive,
are locally uniformly bounded. 
By an example we prove that $T_\eps$ is in general
not uniformly Lipschitz continuous as $\eps\to 0$, 
even if the mass distributions are positive and smooth, and the domains are $c$-convex.
\end{abstract}

\maketitle

\baselineskip16.2pt

\parskip3pt

\section{Introduction}

The Monge mass transfer problem consists in finding an optimal mapping from one mass
distribution to another one such that the total cost is minimized among all
measure preserving mappings. This problem was first proposed by Monge \cite
{Monge} and has been studied by many authors in the last two hundred years: among the main achievements in the 20th century we cite \cite{Kant} and \cite{EG}. 

In Monge's problem, the cost of moving a mass from point $x$ to
point $y$ is proportional to the distance $|x-y|$, namely the cost function
is given by 
\begin{equation}\label{original cost}
c_0 (x,y)=|x-y|.
\end{equation}
This is a natural cost function. In the last two decades, due to a range of
applications, the optimal transportation for more general cost functions has
been a subject of extensive studies. In order to present the framework more precisely, let $\Omega $ and $\Omega ^{\ast }$ be
two bounded domains in the Euclidean space $\mathbb{R}^{n}$, and let $f$ and 
$g$ be two densities in $\Omega $ and $\Omega ^{\ast }$ respectively,
satisfying the mass balance condition 
\begin{equation}
\int_{\Omega }f(x)dx=\int_{\Omega ^{\ast }}g(y)dy.  \label{sec2 mass balance}
\end{equation}
Let $c$ be a smooth cost function defined on $\Omega \times \Omega ^{\ast }$.

The problem consists in finding a map $T:\Omega\to\Omega^*$ which solves
$$\min \int c(x,T(x))f(x)dx\quad:\quad T_\#f=g,$$
where the last condition reads ``the image measure of $f$ through $T$ is $g$'' and means $\int_A g(y)dy=\int_{T^{-1}(A)}f(x)dx$ for all subsets $A\subset \Omega^*$.

The existence and uniqueness of optimal mappings were obtained in \cite
{Bre, C96b, GM} if the cost function $c$ satisfies

\begin{itemize}
\item[(\textbf{A})] $\forall\ (x_0, y_0)\in \Omega\times \Omega^*$, the
mappings $x\in \overline \Omega\to D_y c(x, y_0)$ and $y\in \overline
\Omega^*\to D_x c(x_0,y)$ are diffeomorphisms onto their ranges.
\end{itemize}

The regularity of optimal mappings was a more complicated issue. 
For the quadratic cost function, it reduces to the regularity of the standard 
Monge-Ampere equation, of which the regularity has been studied by many 
authors (see for instance \cite {Ca92, Ca96a}). For general costs, the regularity was
obtained in \cite{MTW} if the domains satisfy a certain convexity condition, 
$f, g$ are positive and smooth, and the cost function $c$ satisfies the
following structure condition

\begin{itemize}
\item[(\textbf{B})] $\forall\ x\in \overline \Omega, y\in \overline \Omega^* 
$, and vectors $\xi, \eta\in \mathbb{R}^n$ with $\xi \perp \eta$, 
\begin{eqnarray*}
\sum_{i, j, k, l, p, q, r, s} \xi_i\xi_j\eta_k\eta_l \, (c_{ij,rs} -
c^{p,q}c_{ij,p}c_{q,rs})c^{r, k}c^{s, l}(x,y) \ge \beta_0|\xi|^2|\eta|^2,
\end{eqnarray*}
\end{itemize}

\noindent where $\beta _{0}$ is a positive constant. Loeper \cite{Loe}
showed that the optimal mapping may not be continuous if the condition 
({\bf B}) is violated, i.e. when there exist $\xi ,\eta \in 
\mathbb{R}^{n}$ with $\xi \perp \eta $ such that the left hand side is
negative. There are many follow-up researches on the regularity, 
in both the Euclidean space \cite {LTW, TW2} and on manifolds 
\cite {Am1, DG, FRV, KM, LV}. See also \cite {Vil} for recent development.

Monge's mass transfer problem is a prototype of the optimal transportation
and the function (\ref{original cost}) is the natural cost function.
Therefore the existence and regularity of optimal mappings for Monge's
problem are of particular interest. However this cost function does not
satisfy both key conditions, namely the condition (\textbf{A}) for the
existence and the condition (\textbf{B}) for the a priori estimates. 

The existence of optimal mappings for Monge's problem has been studied by
many researchers and was finally proved in \cite{CFM, TW}. Prior to that,
the existence was also obtained in \cite{EG} under some assumptions, 
and obtained in \cite{Sud}, with a gap fixed in \cite {Am}. 
See also \cite{AKP, CP0, CP} for the existence of optimal mappings 
when the norm \eqref{original cost}
is replaced by a more general norm in the Euclidean space. The proofs in 
\cite{CFM, TW} are very similar: both use the approximation $
|x-y|^{1+\varepsilon }\rightarrow |x-y|$ ($\varepsilon \rightarrow 0$). The key point is choosing an approximation with strictly convex costs of the difference $x-y$, which satisfy the assumption (\textbf{A}). The
optimal mapping for Monge's problem is not unique in general. But there is a
unique optimal mapping which is monotone on the transfer rays \cite{FM}.

In this paper we study the regularity of optimal mappings in Monge's mass
transfer problem. As the cost function (\ref{original cost}) does not
satisfy condition (\textbf{B}), the argument in \cite{MTW} does not apply to
Monge's problem. Indeed, Monge's problem also admits several minimizers $T$, even if a special one plays an important role: it is the transport map which is monotone on each transport ray (see  \cite{Am}: we will call this map monotone optimal trasnport). 

The regularity seems a rather tricky problem and very
little is known at the moment. Only in the 2 dimensional case, it was proved
in \cite{FGP} that the monotone optimal mapping is continuous in the
interior of the transfer set (i.e. the union of all transfer rays), under
the assumptions that the densities $f,g$ are positive, continuous, and with
compact, convex and disjoint supports. 

Our strategy to attack the regularity in Monge's problem is to establish
uniform estimates for the optimal mappings with respect to the cost function 
\begin{equation}
c_{\varepsilon }(x,y)=\sqrt{\varepsilon ^{2}+|x-y|^{2}}
\label{sec1 cost function}
\end{equation}
where $\varepsilon \in (0,1]$ is a constant. The cost function $
c_{\varepsilon }$ satisfies both conditions (\textbf{A}) and (\textbf{B}).
Therefore there is a unique optimal mapping $T_{\varepsilon }$ associated
with $c_{\varepsilon }$, given by 
\begin{equation*}
T_{\varepsilon }(x)=x-\frac{\varepsilon Du_{\varepsilon }}{\sqrt{1-|Du_{\varepsilon
}|^{2}}}.
\end{equation*}
where $u_{\varepsilon }$ is the potential function. By direct computation, $
u_{\varepsilon }$ satisfies the Monge-Amp\`{e}re equation \cite{MTW} 
\begin{equation}
\det w_{ij}=\frac{1}{\varepsilon ^{n}}\Big[1-|Du|^{2}\Big]^{\frac{n+2}{2}} 
\frac{f}{g},  \label{sec1 eq}
\end{equation}
with 
\begin{equation*}
\{w_{ij}\}=\Big\{\frac{\sqrt{1-|Du|^{2}}}{\varepsilon }\big(\delta
_{ij}-u_{x_{i}}u_{x_{j}}\big)-u_{x_{i}x_{j}}\Big\}.
\end{equation*}
Under appropriate assumptions, the a priori estimate 
\begin{equation}
\sup_{\Omega ^{\prime }}|D^{2}u_{\varepsilon }(x)|\leq C_{\varepsilon }\ \ \
\forall \ \Omega ^{\prime }\subset \subset \Omega .  \label{sec1 estimates}
\end{equation}
was established in \cite{MTW}, where the upper bound $C_{\varepsilon }$
depends on $\varepsilon $. Notice that the assumptions involve in particular lower bounds on the densities $f$ and $g$ on their respectives domains $\bar\Omega$ and $\bar\Omega^*$. these domains  should be $c_\ve-$convex w.r.t. each other, which typically reduces (if we want to impose it for all $\ve\to 0$) to the case of $\Omega\subset\Omega^*$, with $\Omega^*$ convex. In particular, this rules out the assumption of \cite{FGP}, since the supports will not be disjoint. The case we study is thus completely different form that of \cite{FGP}.

Equation (\ref{sec1 eq}) is strongly singular as $\varepsilon \rightarrow 0$
. Note that, due to the small $\varepsilon $, a uniform bound for $
D^{2}u_{\varepsilon }$ does not mean a uniform estimate for the optimal
mapping $T_{\varepsilon }$. Therefore we need to work directly on the
mapping $T_{\varepsilon }$. 

We wished to prove a uniform bound for $
DT_{\varepsilon }$, namely the uniform Lipschitz continuity of the optimal
mapping $T_{\varepsilon }$. By tedious computations, we are able to prove
that all the eigenvalues of the matrix $DT_{\varepsilon }$, which are all
positive, are locally uniformly bounded as $\varepsilon \rightarrow 0$.
This is one of the two main results of
the paper. Notice that this should bring some information on the behavior of $DT_{0}$, where $
T_{0}$ is the monotone
optimal mapping in Monge's problem. Yet, two problems arise: i) the condition on the eigenvalues being strongly nonlinear and applied to non-symmetric matrices, it is not easy to pass it to the limit, nor to give a meaning to the eigenvalues of $DT_0$ (which is a priori a distribution); ii) even the fact that the maps $T_\ve$ do converge to the monotonic optimal transport is not that easy if the supports of the measures are not disjoint (which is not the case for us).

However, as the matrix $DT_\varepsilon$ is - as we said - not symmetric, the boundedness of
the eigenvalues of $DT_\varepsilon$ does not imply the matrix itself is
uniformly bounded. Interestingly, we find that the matrix $DT_0$ is not
bounded in general. There exist positive and smooth $f, g$ such that $DT_0$
is unbounded at interior points (here by $T_0$ we mean the monotonic Monge optimal transport, and not the limit of $T_\ve$; however, it is possible to prove (see Section 4) that, should $DT_\ve$ be bounded, then $T_\ve\to T_0$, and hence this implies that $DT_{\varepsilon}$ cannot be 
uniformly bounded as $\varepsilon\to 0$). This is the second main result of
the paper.


This paper is arranged as follows. In section 2, we state our main estimate,
Theorem \ref{main estimate}. Section 3 is then devoted to the proof of
Theorem \ref{main estimate}. In section 4, we provide positive and smooth
densities $f,g$ such that the monotonic optimal mapping $T_{0}$ is not
Lipschitz continuous at interior points. We conclude the paper with some remarks and perspectives in Section 5.

\vskip30pt

\section{Uniform a priori estimates}

Let $c=c_{\varepsilon }$ be the cost function given in (\ref{sec1 cost
function}). The optimal mapping $T=T_{\varepsilon }:\Omega \rightarrow
\Omega ^{\ast }$ is given by \cite{MTW} 
\begin{equation}
T(x)=\left[ D_{x}c(x,\cdot )\right] ^{-1}Du(x),  \label{sec2 general T}
\end{equation}
where $u=u_{\varepsilon }$ is a $c$-concave potential function. In this and
the next sections, we deal with the a priori estimates for $DT$. We will
omit the subscript $\varepsilon $ when no confusions arise.

From \cite{MTW}, the potential function $u$ satisfies the fully nonlinear
PDE of Monge-Amp\`{e}re type, 
\begin{equation}
\det (D_{x}^{2}c-D^{2}u)=|\det D_{xy}^{2}c|\frac{f}{g\circ T}\text{ in }
\Omega .  \label{sec2 general pde}
\end{equation}
For the cost function (\ref{sec1 cost function}), one has 
\begin{equation}
Dc(x,y)=\frac{x-y}{\sqrt{\varepsilon ^{2}+|x-y|^{2}}}.  \label{sec2 temp1}
\end{equation}
Hence by (\ref{sec2 general T}), 
\begin{equation}
T(x)=x-L(x)Du(x),  \label{sec2 T}
\end{equation}
where the function $L$ is given by 
\begin{equation}
L(x)=:\frac{\varepsilon }{\sqrt{1-v}}  \label{sec2 L}
\end{equation}
and 
\begin{equation}
v=:|Du|^{2}.  \label{sec2 v}
\end{equation}
From (\ref{sec2 T}) and (\ref{sec2 L}), we can solve 
\begin{equation}
v=\frac{d^{2}\left( x\right) }{\varepsilon ^{2}+d^{2}\left( x\right) }
\label{sec2 v invariant under coord}
\end{equation}
and consequently 
\begin{equation}
L=\sqrt{\varepsilon ^{2}+d^{2}\left( x\right) },
\label{sec2 L invariant under coord}
\end{equation}
where 
\begin{equation}
d\left( x\right) =\left\vert x-T\left( x\right) \right\vert .
\end{equation}
As in \cite{MTW} , we denote 
\begin{eqnarray}
A_{ij}(x) &=&D_{x^{i}x^{j}}^{2}c(x,T(x))  \label{sec2 matrixA} \\
&=&\frac{1}{L}(\delta _{ij}-D_{i}uD_{j}u).  \notag
\end{eqnarray}
Then equation (\ref{sec2 general pde}) can be written in the form 
\begin{equation}
\det w_{ij}=\frac{\varepsilon ^{2}}{L^{n+2}}\frac{f}{g\circ T},
\label{sec2 pde}
\end{equation}
where 
\begin{equation}
w_{ij}=:A_{ij}-D_{ij}^{2}u  \label{sec2 matrixW}
\end{equation}
is a nonnegative symmetric matrix.

We observe from (\ref{sec2 matrixA}) that $A_{ij}$ is positive definite, and
the inverse matrix of $A_{ij}$ is given by 
\begin{equation}
A^{ij}=L\big(\delta _{ij}+\frac{L^{2}}{\varepsilon ^{2}}D_iuD_ju\big).
\label{sec2 inverse of A}
\end{equation}
Let us denote 
\begin{equation}
W=:\sum_{\alpha ,\beta =1}^{n}A^{\alpha \beta }w_{\alpha \beta }.
\label{sec2 W}
\end{equation}
Then we have following uniform estimates:

\begin{theorem}
\label{main estimate} Suppose $\Omega ,\Omega ^{\ast }$ are bounded domains
in $\mathbb{R}^{n}$ ($n\geq 2)$, $f\in C^{1,1}(\Omega ),g\in C^{1,1}(\Omega
^{\ast })$, $f,g$ have positive upper and lower bounds, and (\ref{sec2 mass
balance}) holds. Let $u\in C^{3,1}(\Omega )$ be a $c$-concave solution to ( 
\ref{sec2 pde}), then we have a priori estimate 
\begin{equation}
W(x)\leq C,  \label{boundedness W}
\end{equation}
where $C$ depends on $n$, $dist(x,\partial \Omega )$, $f$ and $g$, but is
independent of the constant $\varepsilon \in (0,1]$.
\end{theorem}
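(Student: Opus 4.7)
The plan is a maximum-principle argument for an auxiliary test function built from $W$. Because the cost $c_{\varepsilon}$ depends on $x-y$ only through $|x-y|^{2}+\varepsilon^{2}$, it satisfies the MTW condition (\textbf{B}) for each fixed $\varepsilon>0$, and the local estimate of \cite{MTW} guarantees enough regularity of $u$ to differentiate \eqref{sec2 pde} pointwise. The task, and the novelty, is to make the constant in the resulting bound independent of $\varepsilon\in(0,1]$.

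Concretely, I would consider a test function of the form
\[
M(x)=\eta(x)\,\zeta\!\left(v(x)\right)W(x),
\]
where $\eta$ is a cutoff in $\dist(x,\partial\Omega)$, $v=|Du|^{2}$ as in \eqref{sec2 v}, and $\zeta$ is an increasing positive function on $[0,1)$ to be chosen. A weight degenerating as $v\to 1$ is natural, since $L=\varepsilon/\sqrt{1-v}$ is precisely the quantity through which $\varepsilon$ enters every coefficient of the problem. Assume $M$ attains an interior maximum at $x_{0}$. At $x_{0}$ one has $(\log M)_{k}=0$ and $w^{ij}(\log M)_{ij}\le 0$, where $w^{ij}$ denotes the inverse of the positive definite matrix $w_{ij}$; this is the natural linearized operator of the Monge-Amp\`ere equation \eqref{sec2 pde}. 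The first-order condition allows one to trade $W_{k}$ for $\zeta' v_{k}/\zeta$ and $\eta_{k}/\eta$.

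The core computation is that of $w^{ij}W_{ij}$. Using the identity $W=n-\sum A^{\alpha\beta}u_{\alpha\beta}$, differentiating $\log\det w_{ij}=2\log\varepsilon-(n+2)\log L+\log f-\log(g\circ T)$ twice, and commuting third derivatives of $u$, one obtains schematically
\[
w^{ij}W_{ij}=(\text{MTW term})+(\text{lower-order terms in }w^{ij},u_{i},L,W),
\]
the MTW term being the positive quadratic form from condition (\textbf{B}) for $c_{\varepsilon}$ applied to third derivatives of $u$. After substituting the first-order relation, this good term becomes quadratic in $W$ with a definite sign, so the maximum-principle inequality yields $W(x_{0})\le C(1+W(x_{0})^{1-\delta})$ for some $\delta>0$, and the cutoff $\eta$ transports the bound into the interior estimate in the statement.

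The main obstacle is the bookkeeping of the $\varepsilon$-scaling. From \eqref{sec2 inverse of A}, many terms in $w^{ij}W_{ij}$ carry factors $L^{2}/\varepsilon^{2}$, and a naive estimate produces constants blowing up as $\varepsilon\to 0$. The MTW constant $\beta_{0}=\beta_{0}(\varepsilon)$ itself tends to zero at a comparable rate, so the positivity of the good term must be leveraged term-by-term against the worst of the bad ones. The key observation, which has to be checked explicitly, is that every factor $\varepsilon^{-1}$ appearing in the expansion is paired with a factor $L$; thus $W$, $v$, and $L|D^{2}u|$ are the correct scale-invariant quantities, and the estimate closes with a universal constant. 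Choosing $\zeta$ (typically an exponential or a power of $1-v$) to absorb the sign-indefinite contributions coming from differentiating $L$ and from the $T$-dependence in the right-hand side of \eqref{sec2 pde} is what makes the argument uniform in $\varepsilon$.
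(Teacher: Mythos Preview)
Your framework is right --- a maximum-principle argument applied to $\eta W$ is exactly what the paper does --- but two of your key mechanisms are off, and the proposal as written does not close.

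First, the MTW term cannot be the source of the uniform positive term. You yourself note that $\beta_{0}(\varepsilon)\to 0$; in fact, for $c_{\varepsilon}$ the MTW tensor scales like $\varepsilon^{2}/L^{3}$, so invoking condition~(\textbf{B}) as a black box gives nothing uniform. The paper never uses~(\textbf{B}). Instead, the positive term emerges from the explicit algebra of $A_{ij}=L^{-1}(\sigma_{ij}-u_{i}u_{j})$: after a linear change normalizing $A_{ij}(x_{0})=\delta_{ij}$ and $w_{ij}(x_{0})=\mathrm{diag}(\lambda_{i})$, one computes $\sum w^{ii}(A_{\alpha\alpha;ii}-A_{ii;\alpha\alpha})$ and $\sum w^{ii}A_{\alpha\alpha;ii}w_{\alpha\alpha}$ directly (the paper's Lemmas~1 and~2). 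What survives is the good term
\[
\frac{L}{\varepsilon^{2}}\,\mathcal W\sum_{i}w_{ii}^{2},\qquad \mathcal W:=\sum_{i}\frac{1}{\lambda_{i}},
\]
while every bad term is bounded by $\frac{L}{\varepsilon^{2}}\,Q$ with $Q\le C\big(1+\frac{W}{\eta}+W^{2}+\frac{W\mathcal W}{\eta}\big)$. The common prefactor $L/\varepsilon^{2}$ appears on both sides and cancels; this is the correct scaling observation, not that each $\varepsilon^{-1}$ is paired with an $L$.

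Second, your closing step is wrong in detail: the bad terms already contain $W^{2}$, so a good term merely quadratic in $W$ would not suffice. The actual gain is the extra factor $\mathcal W$ in the good term. Since $\prod_{i}\lambda_{i}=\det(DT)=f/(g\circ T)$ is uniformly bounded above and below, one has $\mathcal W\ge C\,W^{1/(n-1)}$, and the inequality at $x_{0}$ becomes
\[
H^{2+\frac{1}{n-1}}\le C\,(1+H^{2}),\qquad H=\eta W,
\]
which bounds $H$. This use of the equation to control $\mathcal W$ from below is the missing idea in your sketch.

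Finally, the weight $\zeta(v)$ is unnecessary: the paper uses the bare test function $\eta W$. Your intuition that the $v\to 1$ regime is delicate is correct, but the delicacy is handled by the normalization $A_{ij}(x_{0})=\delta_{ij}$ and the repeated use of $u^{i}u_{i}\le 1$, not by an explicit barrier in $v$.
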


By (\ref{sec2 T}) (\ref{sec2 matrixW}) and (\ref{sec2 inverse of A}), it is
ready to check that the Jacobian matrix of $T$ is given by 
\begin{eqnarray}
T_{j}^{i} &=&\delta _{ij}-L_{j}u_{i}-Lu_{ij}  \label{sec2 DT} \\
&=&\delta _{ij}-L(u_{ij}+\frac{L^{2}}{\varepsilon ^{2}}u_{i}
\sum_{k}u_{k}u_{kj})  \notag \\
&=&\sum_{k}A^{ik}w_{kj}.  \notag
\end{eqnarray}
Since the matrices $\{A^{ij}\}$ and $\{w_{ij}\}$ are positive, then $DT$ is diagonalizable, and its
eigenvalues $\lambda _{1,}\cdots ,\lambda _{n}$ of Jacobian $DT$ are
positive, and $\sum_{i=1}^{n}\lambda _{i}=W$. So if $W$ is bounded, one
immediately sees that all the eigenvalues of $DT$ are bounded from above and
below. We therefore have

\begin{corollary}
Under the assumptions of Theorem \ref{main estimate}, we have for any $
\Omega^{\prime }\subset\subset\Omega$, 
\begin{equation}
C^{-1}\leq \min_{i}\lambda _{i}\leq \max_{i}\lambda _{i}\leq C\ \ \text{in}\
\Omega^{\prime },
\end{equation}
where $C$ depends on $n$, $dist(\Omega^{\prime },\partial \Omega )$, $f$ and 
$g$, but is independent of $\varepsilon \in (0,1]$.
\end{corollary}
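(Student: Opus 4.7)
The plan is to reduce the statement entirely to linear-algebraic consequences of Theorem \ref{main estimate} together with the Monge--Amp\`ere equation (\ref{sec2 pde}), without any further PDE argument. The identity $T^i_j=\sum_k A^{ik}w_{kj}$ from (\ref{sec2 DT}) writes $DT$ as the product of the two symmetric positive definite matrices $A^{-1}=\{A^{ij}\}$ and $W=\{w_{ij}\}$; a standard fact (the product of two symmetric positive definite matrices is similar to a symmetric positive definite matrix, via conjugation by $A^{1/2}$) gives that $DT$ is diagonalizable with positive eigenvalues $\lambda_1,\dots,\lambda_n$, as the paper already notes. Crucially, taking the trace of $T^i_j=\sum_k A^{ik}w_{kj}$ yields
\begin{equation*}
\sum_{i=1}^n \lambda_i \;=\; \operatorname{tr}(DT) \;=\; \sum_{i,k} A^{ik}w_{ki} \;=\; W,
\end{equation*}
so Theorem \ref{main estimate} directly bounds $\sum\lambda_i$ by a constant $C$ depending only on $n$, $\dist(\Omega',\p\Omega)$, $f$ and $g$, and independent of $\varepsilon$.

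Because every $\lambda_i$ is positive and their sum is at most $C$, I immediately obtain the upper bound $\max_i\lambda_i\le W\le C$ in $\Omega'$. This is the easy half.

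For the lower bound I use the determinant instead of the trace. From (\ref{sec2 matrixA}) and the identity $1-|Du|^2=\varepsilon^2/L^2$ (which follows from (\ref{sec2 v invariant under coord})--(\ref{sec2 L invariant under coord})) one computes
\begin{equation*}
\det A \;=\; \frac{1}{L^n}\det(I-Du\otimes Du)\;=\;\frac{1-|Du|^2}{L^n}\;=\;\frac{\varepsilon^2}{L^{n+2}},
\end{equation*}
so combining with the Monge--Amp\`ere equation (\ref{sec2 pde}) gives
\begin{equation*}
\prod_{i=1}^n \lambda_i \;=\; \det(DT) \;=\; \frac{\det w_{ij}}{\det A_{ij}} \;=\; \frac{f}{g\circ T}.
\end{equation*}
Under the hypotheses of Theorem \ref{main estimate}, $f$ and $g$ have positive upper and lower bounds, so $\prod\lambda_i\ge c_0>0$ for some uniform constant $c_0$ independent of $\varepsilon$.

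Finally I combine the two estimates: for each index $j$,
\begin{equation*}
\lambda_j \;=\; \frac{\prod_{i}\lambda_i}{\prod_{i\ne j}\lambda_i}\;\ge\;\frac{c_0}{(\max_i\lambda_i)^{n-1}}\;\ge\;\frac{c_0}{C^{n-1}},
\end{equation*}
which gives the desired $\varepsilon$-independent lower bound. I do not anticipate any genuine obstacle here: once Theorem \ref{main estimate} is in hand and one observes that $W$ is exactly the trace of $DT$, the only ingredient missing is the determinant, which is supplied by the equation itself. The qualitative moral is that Theorem \ref{main estimate} controls the trace, the PDE controls the determinant, and together with positivity they pinch every eigenvalue between two uniform positive constants.
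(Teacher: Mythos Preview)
Your argument is correct and matches the paper's approach: the paper also notes that $DT=A^{-1}w$ has positive eigenvalues summing to $W$, giving the upper bound, and then says ``one immediately sees'' the lower bound without spelling it out. Your use of $\det DT=\det w/\det A=f/(g\circ T)$ to obtain $\prod\lambda_i\ge c_0$ and hence $\lambda_j\ge c_0/C^{n-1}$ is precisely the computation the paper is tacitly invoking, so you have made the implicit step explicit.
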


In view of (\ref{sec2 inverse of A}) and (\ref{sec2 W}), one finds that 
\begin{equation*}
W=L\sum_{i}w_{ii}+\frac{L^{3}}{\varepsilon ^{2}}\sum_{i,j}u_{i}u_{j}w_{ij}.
\end{equation*}
Hence we obtain 
\begin{equation*}
L\sum w_{ii}\leq C
\end{equation*}
By \eqref{sec2 matrixA} and \eqref{sec2 matrixW}, we also obtain the
estimate for $D^2 u$.

\begin{corollary}
Under the assumptions of Theorem \ref{main estimate}, we have for any $
\Omega^{\prime }\subset\subset\Omega$, 
\begin{equation}
|D^{2}u|\leq C/L \ \ \text{in}\ \Omega^{\prime },
\end{equation}
where $C$ depends on $n$, $dist(\Omega^{\prime },\partial \Omega )$, $f$ and 
$g$, but is independent of $\varepsilon \in (0,1]$.
\end{corollary}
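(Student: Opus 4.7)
The plan is to extract the desired bound on $|D^2 u|$ directly from Theorem \ref{main estimate} by an algebraic manipulation, with no further PDE analysis required. First I would invoke Theorem \ref{main estimate} on $\Omega'$ to obtain $W(x) \le C$, with $C$ depending on $\mathrm{dist}(\Omega',\partial\Omega)$, $f$ and $g$ but independent of $\varepsilon$. Substituting the explicit formula (\ref{sec2 inverse of A}) for $A^{ij}$ into the definition (\ref{sec2 W}) yields, exactly as in the paragraph just preceding the corollary,
\[
W = L \sum_i w_{ii} + \frac{L^{3}}{\varepsilon^{2}} \sum_{i,j} u_i u_j \, w_{ij}.
\]
Since $w=(w_{ij})$ is a non-negative symmetric matrix (as noted right after (\ref{sec2 matrixW})) and the rank-one matrix $(u_i u_j)$ is positive semi-definite, both summands on the right are non-negative. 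Each is therefore bounded by $W\le C$; in particular $L\,\mathrm{tr}(w)\le C$, i.e.\ $\sum_i w_{ii}\le C/L$.

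Next I would promote this trace bound on $w$ to a bound on every entry. The positive semi-definiteness of $w$ gives the Cauchy--Schwarz-type inequality $|w_{ij}|\le \sqrt{w_{ii}w_{jj}}\le \max_k w_{kk}$, so $|w_{ij}|\le C/L$ for all $i,j$. Rearranging (\ref{sec2 matrixW}) then gives $D^{2}_{ij}u = A_{ij}-w_{ij}$. From (\ref{sec2 matrixA}) together with the identity $v=|Du|^{2}<1$ recorded in (\ref{sec2 v invariant under coord}), one has $|A_{ij}|\le L^{-1}(1+|Du|^{2})\le 2/L$. Combining these two pointwise bounds yields $|D^{2}u|\le C/L$ on $\Omega'$, with the required dependence of $C$.

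There is no real obstacle to carrying out this plan: the corollary is a purely algebraic consequence of Theorem \ref{main estimate} and the non-negativity of the matrix $w$. The only point worth checking carefully is the off-diagonal control $|w_{ij}|\le \mathrm{tr}(w)$ for a positive semi-definite matrix, but this is standard, and the factor $1/L$ is preserved throughout the manipulation because the trace bound $\mathrm{tr}(w)\le C/L$ already carries it.
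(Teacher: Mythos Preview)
Your proposal is correct and follows essentially the same route as the paper: expand $W$ via (\ref{sec2 inverse of A}) and (\ref{sec2 W}), drop the non-negative quadratic term to get $L\,\mathrm{tr}(w)\le C$, and then combine $D^2_{ij}u=A_{ij}-w_{ij}$ with the explicit bound on $A_{ij}$ from (\ref{sec2 matrixA}). You have simply made explicit the off-diagonal control $|w_{ij}|\le\mathrm{tr}(w)$ and the estimate $|A_{ij}|\le 2/L$ that the paper leaves to the reader.
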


Corollary 2 morally gives a $C^{1,1}$ estimate for the potential function $u_{0}=\lim_{\varepsilon
\rightarrow 0}u$  on the set 
\begin{equation*}
E_{\delta }=\left\{ x\in \Omega :|T(x)-x|\geq \delta >0\right\} .
\end{equation*}
This recovers a well-known result which reads ``the potential is $C^{1,1}$ in the interior of transport rays'', which was also used by \cite{Am} in order to prove the countable Lipschitz property of the direction of $Du$. At a point $x_{0}$ with $v(x_{0})>0$, denote 
\begin{equation*}
\nu =-\frac{Du\left( x_{0}\right) }{\left\vert Du\left( x_{0}\right)
\right\vert },
\end{equation*}
and let $\xi ^{\alpha }$ be unit vectors such that $\left\{ \nu ,\xi
^{\alpha }\right\} _{\alpha =1,...,n-1}$ are orthonormal. We denote 
\begin{eqnarray*}
T_{\nu }^{\nu } &=&\sum_{i,j}\nu _{i}\nu _{j}T_{j}^{i}, \\
T_{\xi ^{\alpha }}^{\xi ^{\alpha }} &=&\sum_{i,j}\xi _{i}^{\alpha }\xi
_{j}^{\alpha }T_{j}^{i}.
\end{eqnarray*}
By (\ref{sec2 DT}) and (\ref{sec2 inverse of A}), 
\begin{eqnarray}
D_{\nu }\left\langle \nu ,T\right\rangle &=&T_{\nu }^{\nu }=\sum_{i,j,k}\nu
_{i}A^{ik}w_{kj}\nu _{j}  \label{sec2 rk1} \\
&=&L\sum_{i,j,k}\left( 1+\frac{vL^{2}}{\varepsilon ^{2}}\right) \nu
_{k}w_{kj}\nu _{j}  \notag \\
&=&\frac{L^{3}}{\varepsilon ^{2}}\sum_{j,k}\nu _{k}w_{kj}\nu _{j}.  \notag
\end{eqnarray}
Similarly, 
\begin{equation}
D_{\xi ^{\alpha }}\left\langle \xi ^{\alpha },T\right\rangle =T_{\xi
^{\alpha }}^{\xi ^{\alpha }}=L\sum_{j,k}\xi _{k}^{\alpha }w_{kj}\xi
_{j}^{\alpha }.  \label{sec2 rk2}
\end{equation}
Noticing that $\{w_{ij}\}$ is positive definite, it is clear from (\ref{sec2
rk1}) and (\ref{sec2 rk2}) that $T_{\nu }^{\nu }$ and $T_{\xi ^{\alpha
}}^{\xi ^{\alpha }}$ are positive. Recall that 
\begin{equation*}
W=T_{\nu }^{\nu }+\sum_{\alpha =1}^{n-1}T_{\xi ^{\alpha }}^{\xi ^{\alpha }}.
\end{equation*}
Hence by (\ref{boundedness W}) we obtain

\begin{corollary}
Under the assumptions of Theorem \ref{main estimate}, we have for any $
\Omega^{\prime }\subset\subset\Omega$, 
\begin{equation}
\left\{ {
\begin{split}
&T_{\nu }^{\nu }\leq C \\
&T_{\xi ^{\alpha }}^{\xi ^{\alpha }}\leq C
\end{split}
}\right.\ \ \ \ \text{in}\ \Omega',
\end{equation}
where $C$ depends on $n$, $dist(\Omega^{\prime },\partial \Omega )$, $f$ and 
$g$, but is independent of $\varepsilon \in (0,1]$.
\end{corollary}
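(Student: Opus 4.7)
The plan is to deduce the corollary directly from Theorem \ref{main estimate} together with the explicit formulas \eqref{sec2 rk1}--\eqref{sec2 rk2}. The key observation is the orthogonal trace decomposition
\[
W \;=\; T_\nu^\nu \,+\, \sum_{\alpha=1}^{n-1} T_{\xi^\alpha}^{\xi^\alpha},
\]
which is valid because $\{\nu,\xi^1,\dots,\xi^{n-1}\}$ is an orthonormal basis at $x_0$ and $W=\sum_i A^{ii}w_{ii}=\mathrm{tr}(DT)$ is the trace of $DT$ in any such basis. Once each of the $n$ summands on the right is shown to be non-negative, each is automatically dominated by $W$, and the theorem delivers the uniform bound claimed.

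First I would verify \eqref{sec2 rk1}--\eqref{sec2 rk2}. Starting from $T_j^i=\sum_k A^{ik}w_{kj}$ in \eqref{sec2 DT} and the explicit inverse \eqref{sec2 inverse of A}, contracting once with $\nu$ (which is parallel to $Du$, so that $\sum_k u_k\nu_k=\pm\sqrt v$ and $u_iu_k=v\,\nu_i\nu_k$) collapses the mixed $u_iu_k$ piece into a multiple of $\nu_k$; the resulting prefactor $1+vL^2/\varepsilon^2$ simplifies to $L^2/\varepsilon^2$ by means of the algebraic identities \eqref{sec2 v invariant under coord}--\eqref{sec2 L invariant under coord}. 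Contracting with $\nu$ a second time then gives \eqref{sec2 rk1}. In the transverse directions one has $\sum_k u_k\xi_k^\alpha=0$, so the mixed term drops out completely and \eqref{sec2 rk2} follows after contracting with $\xi^\alpha$ on the right.

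Next, the $c$-concavity of $u$ forces $\{w_{ij}\}=\{A_{ij}-D_{ij}^2u\}$ to be positive semidefinite, so the quadratic forms $\sum_{j,k}\nu_k w_{kj}\nu_j$ and $\sum_{j,k}\xi_k^\alpha w_{kj}\xi_j^\alpha$ are non-negative. Since the scalar prefactors $L^3/\varepsilon^2$ and $L$ are strictly positive, we conclude $T_\nu^\nu\ge 0$ and $T_{\xi^\alpha}^{\xi^\alpha}\ge 0$ for each $\alpha$. Combining this with the trace decomposition above and the uniform bound $W\le C$ from Theorem \ref{main estimate} yields simultaneously $T_\nu^\nu\le C$ and $T_{\xi^\alpha}^{\xi^\alpha}\le C$ on $\Omega'$, which is exactly the content of the corollary. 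There is no conceptual obstacle here: all the analytic work is contained in Theorem \ref{main estimate}, and the present statement is a clean linear-algebraic consequence once the adapted orthonormal frame $\{\nu,\xi^\alpha\}$ is introduced, with the only minor caveat that $\nu$ requires $v(x_0)>0$ — at points where $Du=0$ one has $T(x)=x$ and the bound is trivial in any frame.
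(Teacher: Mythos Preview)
Your proposal is correct and follows essentially the same route as the paper: derive \eqref{sec2 rk1}--\eqref{sec2 rk2}, observe that the positive semidefiniteness of $\{w_{ij}\}$ makes each of $T_\nu^\nu$ and $T_{\xi^\alpha}^{\xi^\alpha}$ non-negative, and then bound each by their sum $W=T_\nu^\nu+\sum_\alpha T_{\xi^\alpha}^{\xi^\alpha}$ via Theorem~\ref{main estimate}. Your added remark that $\nu$ is only defined when $v(x_0)>0$, with the case $Du=0$ being trivial, is a welcome clarification the paper leaves implicit.
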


At the limit, this corresponds to saying (even if what we state here is not rigorous) that the limit mapping $T_{0}$ is Lipschitz continuous in the direction of transfer
rays, and for any unit vector $\xi $ perpendicular to the transfer rays, $
\langle \xi ,T_{0}\rangle $ is Lipschitz continuous in the $\xi $-direction. The Lipschitz continuity along transport rays is not surprising, since we are doing one-dimensional optimal transport between two measures with upper and lower bounds on their densities; yet, the densities of the one-dimensional problem along each ray are affected by a Jacobian factor (due to the decomposition of $f$ and $g$ along rays), and this makes this Lipschitz result not completely evident.
In section 4, we will construct an example to show that the component $
\left\langle \nu _{0},T_{0}\right\rangle $ is in general not Lipschitz
continuous in $\xi $, even though the mass distributions are positive and
smooth, where $\nu _{0}$ is a direction of transfer rays and $\xi \bot \nu
_{0}$.

\vskip10pt

In Theorem \ref{main estimate}, we assume that $u\in C^{3,1}$. This
assumption is not needed if $\Omega \subset \Omega ^{\ast}$ and $
\Omega^{\ast }$ is a bounded convex domain in $\mathbb{R}^{n}$, as it
implies that $\Omega^{\ast }$ is $c^{\ast }$-convex with respect to $\Omega$
and by approximation, and the condition $u\in C^{3,1}$ is always satisfied, see \cite{MTW}.

\vskip30pt

\section{Proof of Theorem \protect\ref*{main estimate}}

To prove Theorem \ref{main estimate}, we introduce the auxiliary function 
\begin{equation}
H(x)=\eta (x)W(x),  \label{sec3 auxiliary function}
\end{equation}
where $\eta $ is a cut-off function. Suppose that $H$ attains its maximum at
some point $x_{0}$. To prove that $H(x_{0})$ is uniformly bounded in $
\varepsilon $, the computation is rather complicated. We find the
computation can be made a little simpler if we first make a linear
transformation such that 
\begin{equation}
A_{ij}(x_{0})=\delta _{ij},  \label{sec3 coord1}
\end{equation}
and then make a rotation of coordinates such that 
\begin{equation}
w_{ij}(x_{0})=diag\left\{ \lambda _{1},...,\lambda _{n}\right\} .
\label{sec3 coord2}
\end{equation}
It is well-known that $A_{ij},$ $w_{ij}$ are tensors \cite{KM}. An advantage
of working on tensors is that one may choose a particular coordinate system
to simplify the computation. As we only made a linear transform on the
Euclidean space $\mathbb{R}^{n}$, the Riemannian curvature tensor under the
metric $\sigma _{ij}$ vanishes, which allows us to exchange the derivatives
freely. In the following we will use $D$ to denote the normal derivatives in 
$\mathbb R^n$ and $\nabla $ to denote covariant derivatives under the
metric $\sigma$.

Suppose the linear transformation is given by $y=P^{-1} x$ (i.e. $x_{i}=\sum
P_{ik}y_{k}$) such that $P^{T}AP=I$ is the unit matrix at $x_0$. Then by ( 
\ref{sec2 matrixA}) and (\ref{sec2 matrixW}), 
\begin{eqnarray*}
\bar{A}_{ij} &=&\sum_{k,l}A_{kl}P_{ki}P_{lj}=\left( P^{T}AP\right) _{ij}, \\
\bar{w}_{ij} &=&\sum_{k,l}w_{kl}P_{ki}P_{lj}=\left( P^{T}wP\right) _{ij},
\end{eqnarray*}
where bar denotes quantities in the $y$-coordinates.

Denote $\left\{ \sigma _{ij}\right\} =P^{T}P$, and $\left\{ \sigma
^{ij}\right\} =\left( P^{T}P\right) ^{-1}$. Then by (\ref{sec2 matrixA}) and
(\ref{sec2 inverse of A}), 
\begin{eqnarray}
\delta _{ij} &=&\bar{A}_{ij}=\left( P^{T}AP\right) _{ij}  \label{sec3 temp0a}
\\
&=&\frac{1}{L}(\sigma _{ij}-\bar{u}_{i}\bar{u}_{j})  \notag
\end{eqnarray}
and 
\begin{eqnarray}
\delta _{ij} &=&\bar{A}^{ij}=\left( P^{-1}A^{-1}(P^{T})^{-1}\right) _{ij}
\label{sec3 temp0b} \\
&=&L\big(\sigma ^{ij}+\frac{L^{2}}{\varepsilon ^{2}}\bar{u}^{i}\bar{u}^{j}
\big),  \notag
\end{eqnarray}
where $\bar{u}^{i}=:\sum_{k}\sigma ^{ik}\bar{u}_{k}$. Note that by (\ref
{sec2 v invariant under coord}) and (\ref{sec2 L invariant under coord}), $v$
and $L$ are invariant under the coordinate transformation, and 
\begin{eqnarray}
&&\bar{v}=\sum \sigma ^{ij}\bar{u}_{i}\bar{u}_{j}=\sum \bar{u}^{i}\bar{u}
_{i}\leq 1,  \label{sec3 temp0c} \\
&&\hskip40pt\varepsilon ^{2}\leq \bar{L}^{2}\leq C.  \label{sec3 temp0d}
\end{eqnarray}

For simplicity we will omit the bar below. In view of (\ref{sec3 temp0a}),
we have, at $x_{0}$, 
\begin{equation}
\sigma _{ij}=L\delta _{ij}+\nabla _{i}u\nabla _{j}u.  \label{sec3 temp0.1}
\end{equation}
By (\ref{sec3 temp0b}), we have, at $x_{0}$, 
\begin{eqnarray*}
u_{i} &=&{\sum }_{j}\delta _{ij}u_{j} \\
&=&{\sum }_{j}L\big(\sigma ^{ij}+\frac{L^{2}}{\varepsilon ^{2}}u^{i}u^{j}
\big)u_{j} \\
&=&L\big(1+\frac{L^{2}}{\varepsilon ^{2}}v\big)u^{i},
\end{eqnarray*}
where $u_{i}=\nabla _{i}u$. By (\ref{sec2 L}), it follows that 
\begin{equation}
u_{i}=\frac{Lu^{i}}{1-v}=\frac{L^{3}}{\varepsilon ^{2}}u^{i}.
\label{sec3 temp0.2}
\end{equation}

Hence $u^{i}=\frac{\varepsilon ^{2}}{L^{3}}u_{i}$ and by \eqref{sec3 temp0b}
, 
\begin{equation}
\sigma ^{ij}=\frac{1}{L}\Big(\delta _{ij}-\frac{\varepsilon ^{2}}{L^{3}}
u_{i}u_{j}\Big).  \label{sec3 temp0.3}
\end{equation}
Formulas \eqref{sec3 temp0.1}, \eqref{sec3 temp0.2} and \eqref{sec3 temp0.3}
will be repeatedly used in our calculation below. Without loss of
generality, we may also assume 
\begin{equation}
\lambda _{1}\geq \lambda _{2}\geq \cdot \cdot \cdot \geq \lambda _{n}
\label{sec3 order for eigenvalues}
\end{equation}
at $x_{0}$.

Since $x_0$ is the maximum point, we have 
\begin{eqnarray}
0 &=&\nabla _{i}\log H(x_0)=\frac{\eta _{i}}{\eta }+\frac{W_{i}}{W}
\label{sec3 critical condition} \\
&=&\frac{\eta _{i}}{\eta }+\frac{\sum w_{\alpha \alpha ;i}}{W}-\frac{\sum
A_{\alpha \alpha ;i}w_{\alpha \alpha }}{W},  \notag \\
0 &\geq &\nabla _{ij}^{2}\log H(x_0)=\frac{\eta _{ij}}{\eta }-2\frac{\eta
_{i}\eta _{j}}{\eta ^{2}}+\frac{W_{ij}}{W}  \notag
\end{eqnarray}
as a matrix, where subscripts $i,j$ on the R.H.S. denote covariant
derivatives in the metric $\sigma $. We thus obtain, at $x_{0}$, 
\begin{equation}
0\geq W\sum w^{ij}(\frac{\eta _{ij}}{\eta }-2\frac{\eta _{i}\eta _{j}}{\eta
^{2}})+\sum w^{ij}W_{ij},  \label{sec3 temp1}
\end{equation}
where $w^{ij}$ is the inverse of $w_{ij}$.

Differentiating (\ref{sec2 pde}) gives 
\begin{eqnarray}
\sum w^{ij}w_{ij;a} &=&\varphi _{a},  \label{differentiating pde} \\
\sum w^{ij}w_{ij;ab} &=&\sum w^{is}w^{jt}w_{ij;a}w_{st;b}+\varphi _{ab},
\label{differentiating pde 2nd}
\end{eqnarray}
where $\varphi $ is given by 
\begin{equation}
\varphi =\log \Big(\frac{\varepsilon ^{2}}{L^{n+2}}\frac{f}{g\circ T}\Big).
\label{sec3 phi}
\end{equation}
In our computation we use the notation $w_{ij;k}=\nabla _{k}w_{ij}$, $
w_{ij;kl}=\nabla _{l}\nabla _{k}w_{ij}$, $A_{ij;k}=\nabla _{k}A_{ij}$ and $
A_{ij;kl}=\nabla _{l}\nabla _{k}A_{ij}$.

To estimate the term $\sum w^{ij}W_{ij}$ in (\ref{sec3 temp1}), we first
prove the following lemma.

\begin{lemma}
\label{DA} We have 
\begin{eqnarray*}
A_{ij;k} &=&\frac{L^{2}}{\varepsilon ^{2}}A_{ij}u^{h}w_{hk}+\frac{1}{L}
(u_{j}w_{ik}+u_{i}w_{jk}) \\
&& -\frac{1}{L}(A_{ij}u_{k}+A_{ik}u_{j}+u_{i}A_{jk}), \\
A_{ii;\beta }-A_{i\beta ;i} &=&\frac{L^{2}}{\varepsilon ^{2}}
(A_{ii}u^{t}w_{t\beta }-A_{i\beta }u^{t}w_{ti}) +\frac{1}{L}(w_{i\beta
}u_{i}-w_{ii}u_{\beta }),
\end{eqnarray*}
(we use the summation convention $u^{h}w_{hk}=\sum_h u^{h}w_{hk}$).
\end{lemma}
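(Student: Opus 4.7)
The plan is to derive the first identity by direct computation from the tensor form of $A_{ij}$, and then to obtain the second identity as an immediate consequence by choosing specific index combinations and subtracting. Both identities should be tensor identities in the special coordinate system constructed at $x_0$ (not only pointwise equalities valid at $x_0$), and they will follow from algebraic manipulation of a few building blocks already assembled in the excerpt.

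The starting point is the coordinate-invariant form of \eqref{sec2 matrixA}, namely $A_{ij} = \frac{1}{L}(\sigma_{ij} - u_i u_j)$, which was verified in the $y$-coordinates in \eqref{sec3 temp0a}. Contracting with $u^j = \sigma^{jl} u_l$ and using $u^j u_j = v$ together with $1 - v = \varepsilon^2/L^2$ produces the auxiliary identity $u^j A_{jk} = \frac{\varepsilon^2}{L^3} u_k$, which will be used to convert raised $u$-indices into lowered ones at the cost of powers of $L$ and $\varepsilon$. Since the change of variables is linear, the metric $\sigma$ has constant components in the $y$-coordinates, the Christoffel symbols vanish, $\nabla \sigma = 0$, and covariant derivatives reduce to partial derivatives that can be exchanged freely.

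Covariantly differentiating $L A_{ij} = \sigma_{ij} - u_i u_j$ then gives $A_{ij;k} = -\frac{L_k}{L} A_{ij} - \frac{1}{L}(u_{ik} u_j + u_i u_{jk})$. From $L = \varepsilon(1-v)^{-1/2}$ and $\nabla_k v = 2 u^j u_{jk}$ one obtains $L_k = \frac{L^3}{\varepsilon^2} u^j u_{jk}$. Substituting the definition $u_{jk} = A_{jk} - w_{jk}$ from \eqref{sec2 matrixW} and applying $u^j A_{jk} = \frac{\varepsilon^2}{L^3} u_k$ yields $L_k = u_k - \frac{L^3}{\varepsilon^2} u^j w_{jk}$. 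Expanding $u_{ik} u_j = (A_{ik} - w_{ik}) u_j$ and $u_i u_{jk} = u_i (A_{jk} - w_{jk})$ and collecting terms produces the claimed formula: the contribution $-\frac{u_k}{L} A_{ij}$ coming from $L_k$ combines with $-\frac{1}{L}(A_{ik} u_j + u_i A_{jk})$ to form the totally symmetric block $-\frac{1}{L}(A_{ij} u_k + A_{ik} u_j + A_{jk} u_i)$, the $w$-terms regroup as $\frac{1}{L}(u_j w_{ik} + u_i w_{jk})$, and the residual $L_k$-piece $\frac{L^2}{\varepsilon^2} A_{ij} u^h w_{hk}$ supplies the leading term.

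The second identity then follows by substitution. Applying the first formula with $j = i$ and $k = \beta$ gives $A_{ii;\beta}$, while applying it with $j = \beta$ and $k = i$ (and first index still $i$) gives $A_{i\beta;i}$. The totally symmetric block $\frac{1}{L}(A_{ij} u_k + A_{ik} u_j + A_{jk} u_i)$ is invariant under the relabeling and therefore cancels in the subtraction. The $w$-cross-term contributes $\frac{2}{L} u_i w_{i\beta}$ to $A_{ii;\beta}$ and $\frac{1}{L}(u_\beta w_{ii} + u_i w_{i\beta})$ to $A_{i\beta;i}$, whose difference is $\frac{1}{L}(u_i w_{i\beta} - u_\beta w_{ii})$, while the $\frac{L^2}{\varepsilon^2} A_{ij} u^h w_{hk}$ term directly produces $\frac{L^2}{\varepsilon^2}(A_{ii} u^h w_{h\beta} - A_{i\beta} u^h w_{hi})$. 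The only obstacle is careful bookkeeping of indices and the powers of $L$ and $\varepsilon$ that accumulate when the single identity $u^j A_{jk} = \frac{\varepsilon^2}{L^3} u_k$ is used repeatedly; there is no conceptual difficulty beyond this.
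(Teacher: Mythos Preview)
Your proof is correct and follows essentially the same route as the paper's: differentiate $A_{ij}=\frac{1}{L}(\sigma_{ij}-u_iu_j)$ using $L_k=\frac{L^3}{\varepsilon^2}u^h u_{hk}$, replace each $u_{hk}$ by $A_{hk}-w_{hk}$, and simplify via the contraction $u^hA_{hk}=\frac{\varepsilon^2}{L^3}u_k$; the second identity then drops out by subtraction. The paper merely says ``the second formula follows immediately,'' whereas you spell out the cancellation of the symmetric block and the $w$-cross-term bookkeeping, but the argument is the same.
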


\begin{proof}
Recall that $v=\sigma ^{ij}u_{i}u_{j}=\sum u^{i}u_{i}$. Therefore 
\begin{eqnarray*}
\frac{dL}{dv} &=&\frac{1}{2}\frac{L^{3}}{\varepsilon ^{2}}, \\
v_{k} &=&2u^{h}u_{hk}.
\end{eqnarray*}
By \eqref{sec3 temp0a}, $A_{ij}=\frac{1}{L}(\sigma _{ij}- {u}_{i} {u}_{j})$.
Differentiating, we get 
\begin{eqnarray}
A_{ij;k} &=&-\frac{1}{L^{2}}\frac{dL}{dv}v_{k}(\sigma _{ij}-u_{i}u_{j})+ 
\frac{1}{L}(-u_{ik}u_{j}-u_{i}u_{jk})  \label{lem1 temp1} \\
&=&-\frac{L^{2}}{\varepsilon ^{2}}A_{ij}u^{h}u_{hk}+\frac{1}{L}
(-u_{ik}u_{j}-u_{i}u_{jk})  \notag \\
&=&\frac{L^{2}}{\varepsilon ^{2}}A_{ij}u^{h}w_{hk}+\frac{1}{L}
(w_{ik}u_{j}+u_{i}w_{jk})  \notag \\
&&-\frac{1}{L}(A_{ij}u_{k}+A_{ik}u_{j}+u_{i}A_{jk}).  \notag
\end{eqnarray}
The second formula follows from (\ref{lem1 temp1}) immediately.
\end{proof}

Differentiating (\ref{sec2 W}) twice and using $A_{ij}\left( x_{0}\right)
=\delta _{ij}$, 
\begin{eqnarray}
{\sum }_{i,j}w^{ij}W_{ij} &=&\sum w^{ij}w_{\alpha \alpha ;ij}-2\sum
w^{ij}A_{\alpha \beta ;i}w_{\alpha \beta ;j}-\sum w^{ij}A_{\alpha \alpha
;ij}w_{\alpha \alpha }  \label{sec3 temp2.1} \\
&&+2\sum w^{ij}A_{\alpha k;i}A_{\beta k;j}w_{\alpha \beta }  \notag \\
&\geq &\sum w^{ij}w_{\alpha \alpha ;ij}-2\sum w^{ij}A_{\alpha \beta
;i}w_{\alpha \beta ;j}-\sum w^{ij}A_{\alpha \alpha ;ij}w_{\alpha \alpha }. 
\notag
\end{eqnarray}
We have by (\ref{differentiating pde 2nd}) 
\begin{eqnarray}
{\sum }_{i,j,\alpha }w^{ij}w_{\alpha \alpha ;ij} &=&\sum w^{ij}A_{\alpha
\alpha ;ij}-\sum w^{ij}u_{\alpha \alpha ij}  \label{sec3 temp2.2} \\
&=&\sum w^{ij}w_{ij;\alpha \alpha }+\sum w^{ij}\left( A_{\alpha \alpha
;ij}-A_{ij;\alpha \alpha }\right)   \notag \\
&\geq &\sum \varphi _{\alpha \alpha }+\sum w^{ii}\left( A_{\alpha \alpha
;ii}-A_{ii;\alpha \alpha }\right) .  \notag
\end{eqnarray}
By the first formula in Lemma \ref{DA} 
\begin{eqnarray*}
{\sum }_{i,j,\alpha ,\beta }w^{ij}A_{\alpha \beta ;i}w_{\alpha \beta ;j} &=&
\frac{L^{2}}{\varepsilon ^{2}}\sum w_{\alpha \alpha ;i}u^{i}-\frac{1}{L}\sum
w^{ij}u_{j}w_{\alpha \alpha ;i} \\
&&+\frac{2}{L}\sum u_{\beta }w_{\alpha \beta ;\alpha }-\frac{2}{L}\sum
w^{ij}u_{\beta }w_{j\beta ;i}.
\end{eqnarray*}
By (\ref{sec3 temp0.2}), it follows 
\begin{eqnarray*}
{\sum }_{i,j,\alpha ,\beta }w^{ij}A_{\alpha \beta ;i}w_{\alpha \beta ;j} &=&3
\frac{L^{2}}{\varepsilon ^{2}}\sum w_{\alpha \alpha ;i}u^{i}-\frac{1}{L}\sum
w^{ij}u_{j}w_{\alpha \alpha ;i} \\
&&+\frac{2}{L}\sum u_{\beta }(A_{\alpha \beta ;\alpha }-A_{\alpha \alpha
;\beta })-\frac{2}{L}\sum w^{ij}u_{\beta }w_{ij;\beta } \\
&&-\frac{2}{L}\sum w^{ii}u_{\beta }(A_{i\beta ;i}-A_{ii;\beta }).
\end{eqnarray*}
By (\ref{differentiating pde}) and the second formula in Lemma \ref{DA}, we
then obtain
\begin{eqnarray*}
{\sum }_{i,j,\alpha ,\beta }w^{ij}A_{\alpha \beta ;j}w_{\alpha \beta ;i} &=&3
\frac{L^{2}}{\varepsilon ^{2}}\sum w_{\alpha \alpha ;i}u^{i}-\frac{1}{L}\sum
w^{ii}u_{i}w_{\alpha \alpha ;i} \\
&&-2\frac{L^{2}}{\varepsilon ^{2}}\sum \varphi _{\beta }u^{\beta }+2\frac{Lv
}{\varepsilon ^{2}}(W-n) \\
&&+2\frac{L}{\varepsilon ^{2}}(\mathcal{W}-n)\sum w_{ii}u_{i}u^{i},
\end{eqnarray*}
where 
\begin{equation*}
\mathcal{W}=:\sum w^{ii}=\sum \frac{1}{\lambda _{i}}.
\end{equation*}
Recalling \eqref{sec3 temp0.2} and \eqref{sec3 temp0c}, 
\begin{equation}
0\leq u^{i}u_{i}\leq \sum u^{i}u_{i}\leq 1  \label{new1}
\end{equation}
for any given $i$. Hence
\begin{eqnarray}
{\sum }_{i,j,\alpha ,\beta }w^{ij}A_{\alpha \beta ;j}w_{\alpha \beta ;i}
 &\leq & 3\frac{L^{2}}{\varepsilon ^{2}}\sum w_{\alpha \alpha ;i}u^{i}
         -\frac{1}{L}\sum w^{ii}u_{i}w_{\alpha \alpha ;i} \label{sec3 temp2.3} \\
 &&-2\frac{L^{2}}{\varepsilon ^{2}}\sum \varphi _{\beta }u^{\beta }+\frac{L}{
\varepsilon^{2}}Q.  \notag
\end{eqnarray}
Here and below we use $Q$ to denote quantities satisfying 
\begin{equation*}
Q\leq C\Big(1+\frac{W}{\eta }+W^{2}+\frac{1}{\eta }W\mathcal{W}\Big).
\end{equation*}
Inserting (\ref{sec3 temp2.2}) and (\ref{sec3 temp2.3}) into (\ref{sec3
temp2.1}), we obtain 
\begin{eqnarray}
\sum w^{ij}W_{ij} &\geq &\sum w^{ii}(A_{\alpha \alpha ;ii}-A_{ii;\alpha
\alpha })-\sum w^{ii}A_{\alpha \alpha ;ii}w_{\alpha \alpha }
\label{sec3 temp2.4} \\
&&-6\frac{L^{2}}{\varepsilon ^{2}}\sum w_{\alpha \alpha ;i}u^{i}+\frac{2}{L}
\sum w^{ii}u_{i}w_{\alpha \alpha ;i}  \notag \\
&&+\sum \varphi _{\alpha \alpha }+\frac{4L^{2}}{\varepsilon ^{2}}\sum
\varphi _{\alpha }u^{\alpha }-\frac{L}{\varepsilon ^{2}}Q.  \notag
\end{eqnarray}

To proceed further, we need the following lemma.

\begin{lemma}
\label{DDA} We have 
\begin{eqnarray*}
{\sum}_{i,\alpha} w^{ii}(A_{\alpha \alpha ;ii}-A_{ii;\alpha \alpha }) &\geq
& \frac{L}{\varepsilon ^{2}}\mathcal{W}\sum w_{ii}^{2} -\frac{2}{L}\sum
w^{ii}u_{i}w_{\alpha \alpha ;i} \\
&& - \frac{L^{2}}{\varepsilon ^{2}}\mathcal{W}\sum w_{\alpha \alpha; h}u^{h}
+ (n+2)\frac{L^{2}}{\varepsilon ^{2}}\sum \varphi _{\beta }u^{\beta } - 
\frac{L}{\varepsilon ^{2}}Q
\end{eqnarray*}
and 
\begin{eqnarray*}
{\sum}_{i,\alpha} w^{ii}A_{\alpha \alpha ;ii}w_{\alpha \alpha } &\leq & - 2 
\frac{L}{\varepsilon ^{2}}\mathcal{W}\sum w_{ii}^{2}u_{i}u^{i} - \frac{L}{
\varepsilon^{2}}\mathcal{W}W\sum w_{ii}u_{i}u^{i} \\
&& +\frac{L^{2}}{\varepsilon ^{2}}W\sum \varphi _{\beta }u^{\beta } +\frac{
2L^{2}}{\varepsilon ^{2}}\sum w_{ii}u^{i}\varphi _{i} +\frac{L}{
\varepsilon^{2}}Q.
\end{eqnarray*}
\end{lemma}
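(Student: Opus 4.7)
The two estimates come from differentiating the identity in Lemma \ref{DA} once more and then taking the appropriate traces. My plan is the following.

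First I would compute $A_{ij;k\ell}$ by applying $\nabla_\ell$ to the formula for $A_{ij;k}$ in Lemma \ref{DA} and expanding by the product rule. Four types of terms appear: derivatives of the scalar $L^2/\varepsilon^2 = 1/(1-v)$, which produce the factor $v_\ell = 2u^h u_{h\ell} = 2u^h(A_{h\ell}-w_{h\ell})$; derivatives of the $A$-factors themselves, which I re-expand by Lemma \ref{DA}; derivatives of the $w$-factors, yielding $w_{ab;\ell}$; and derivatives of the $u_p$ factors, yielding $u_{p;\ell}=A_{p\ell}-w_{p\ell}$. For $u^h = \sigma^{ht}u_t$ I use that $\sigma^{ht}$ is covariantly constant, so $(u^h)_{;\ell}=\sigma^{ht}(A_{t\ell}-w_{t\ell})$. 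Because the metric $\sigma_{ij}$ has zero curvature the covariant derivatives commute, so $A_{ij;k\ell}$ is symmetric in $(k,\ell)$. Evaluating at $x_0$ with the normalizations $A_{ij}=\delta_{ij}$ and $w_{ij}=\mathrm{diag}(\lambda_1,\ldots,\lambda_n)$ collapses the expression to an explicit polynomial in $L,\varepsilon,u_i,u^i,\lambda_i$ together with the first derivatives $w_{ab;c}$ and $\varphi_c$.

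Next, to derive the first inequality I would form the trace $\sum_\alpha(A_{\alpha\alpha;ii}-A_{ii;\alpha\alpha})$ and then contract against $w^{ii}$. The symmetry $A_{ij;k\ell}=A_{ij;\ell k}$ makes most terms cancel in the difference; what survives is governed by the intrinsically asymmetric structure of Lemma \ref{DA}. The leading good term $\frac{L}{\varepsilon^2}\mathcal{W}\sum w_{ii}^2$ arises from the piece in which the dominant factor $\frac{L^2}{\varepsilon^2}u^h w_{hk}$ is differentiated through the $u^h$, producing $\sigma^{ht}(A_{t\ell}-w_{t\ell})$: its diagonal $w_{t\ell}$ contribution paired with a second factor $w_{hk}$ yields $\lambda_i^2=w_{ii}^2$. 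The residual terms that still involve $w_{\alpha\alpha;h}$ are then converted to $\varphi$-terms and $Q$-remainders by means of the differentiated PDE \eqref{differentiating pde}, which at $x_0$ reads $\sum_i w^{ii}w_{ii;k}=\varphi_k$.

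For the second inequality I would perform the analogous computation on $\sum w^{ii}A_{\alpha\alpha;ii}w_{\alpha\alpha}$, but now with the $\alpha$-trace weighted by $\lambda_\alpha=w_{\alpha\alpha}$. Since $\sum_\alpha\lambda_\alpha=W$, the weighting produces the explicit factor $W$ in the terms $\frac{L^2}{\varepsilon^2}W\sum \varphi_\beta u^\beta$ and $-\frac{L}{\varepsilon^2}\mathcal{W}W\sum w_{ii}u_iu^i$. The doubled term $-2\frac{L}{\varepsilon^2}\mathcal{W}\sum w_{ii}^2 u_iu^i$ comes from the same mechanism as in the first inequality, with an extra $u_iu^i$ produced when the differentiation along $u^h$ lands on a $u$-direction and pairs with the $\lambda_i$ weight.

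The main obstacle is the bookkeeping. Because $\frac{L^2}{\varepsilon^2}=\frac{1}{1-v}$ is potentially large, terms carrying one or two copies of this factor are dominant and must cancel exactly where required; a single mislabeled sign would spoil the subsequent absorption in the proof of Theorem \ref{main estimate}. The delicate step is separating the $w_{ab;c}$ combinations that can be absorbed into $Q$ (via \eqref{sec3 critical condition}, the identity $\sum w^{ii}w_{ii;k}=\varphi_k$, and the bound $u^iu_i\le 1$ from \eqref{new1}) from those that must be retained to produce the explicit dominant terms written in the statement.
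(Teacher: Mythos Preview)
Your outline is correct and follows essentially the same route as the paper: differentiate the expression for $A_{\alpha\alpha;i}$ from Lemma~\ref{DA} once more, rewrite third derivatives $u_{iih}$ as $A_{ii;h}-w_{ii;h}$, use the traced equation $\sum_i w^{ii}w_{ii;k}=\varphi_k$ from \eqref{differentiating pde}, and absorb lower-order pieces into $Q$ via \eqref{new1}. One small correction: the maximum condition \eqref{sec3 critical condition} is not invoked in this lemma---the derivative term $-\frac{L^{2}}{\varepsilon^{2}}\mathcal{W}\sum w_{\alpha\alpha;h}u^{h}$ is left explicit in the statement and is only converted afterward in \eqref{sec3 1st critical}--\eqref{sec3 temp3.2}.
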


\begin{proof}
In view of Lemma \ref{DA}, 
\begin{equation}
A_{\alpha \alpha ;i}=-\frac{L^{2}}{\varepsilon ^{2}}A_{\alpha \alpha
}u^{h}u_{hi}-\frac{2}{L}u_{\alpha }u_{\alpha i}.  \label{lem2 temp1}
\end{equation}
By differentiating (\ref{lem2 temp1}), 
\begin{eqnarray}
A_{\alpha \alpha ;ii} &=&-\frac{L^{2}}{\varepsilon ^{2}}A_{\alpha \alpha
}u^{h}u_{hii}-\frac{L^{2}}{\varepsilon ^{2}}A_{\alpha \alpha }\sigma
^{ht}u_{ti}u_{hi}  \label{lem2 temp2} \\
&&-\frac{L^{2}}{\varepsilon ^{2}}A_{\alpha \alpha ;i}u^{h}u_{hi}-2\frac{L^{4}
}{\varepsilon ^{4}}A_{\alpha \alpha }u^{t}u_{ti}u^{h}u_{hi}  \notag \\
&&-\frac{2}{L}u_{\alpha }u_{i\alpha i}-\frac{2}{L}u_{i\alpha }^{2}+\frac{2L}{
\varepsilon ^{2}}u^{h}u_{hi}u_{\alpha }u_{i\alpha }.  \notag
\end{eqnarray}
Plugging (\ref{lem2 temp1}) into (\ref{lem2 temp2}), we infer that
\begin{eqnarray*}
A_{\alpha \alpha ;ii} &=&-\frac{L^{2}}{\varepsilon ^{2}}A_{\alpha \alpha
}u^{h}u_{iih}-\frac{L^{2}}{\varepsilon ^{2}}A_{\alpha \alpha }\sigma
^{ht}u_{ti}u_{hi} \\
&&-\frac{L^{4}}{\varepsilon ^{4}}A_{\alpha \alpha }u^{t}u_{ti}u^{h}u_{hi}-
\frac{2}{L}u_{\alpha }u_{ii\alpha } \\
&&-\frac{2}{L}u_{i\alpha }^{2}+\frac{4L}{\varepsilon ^{2}}
u^{h}u_{hi}u_{\alpha }u_{i\alpha }.
\end{eqnarray*}
By (\ref{sec3 temp0.3}),
\begin{equation*}
-\frac{L^{2}}{\varepsilon ^{2}}A_{\alpha \alpha }\sigma ^{ht}u_{ti}u_{hi}=-
\frac{L}{\varepsilon ^{2}}A_{\alpha \alpha }u_{ii}^{2}+\frac{L^{4}}{
\varepsilon ^{4}}A_{\alpha \alpha }u_{ii}^{2}u^{i}u^{i}.
\end{equation*}
Hence
\begin{eqnarray*}
A_{\alpha \alpha ;ii} &=&-\frac{L^{2}}{\varepsilon ^{2}}A_{\alpha \alpha
}u^{h}u_{iih}-\frac{L}{\varepsilon ^{2}}A_{\alpha \alpha }u_{ii}^{2} \\
&&-\frac{2}{L}u_{\alpha }u_{ii\alpha }-\frac{2}{L}u_{i\alpha }^{2}+\frac{4L}{
\varepsilon ^{2}}u_{ii}u_{i\alpha }u_{\alpha }u^{i} \\
&=&\frac{L^{2}}{\varepsilon ^{2}}A_{\alpha \alpha }u^{h}w_{ii;h}-\frac{L^{2}
}{\varepsilon ^{2}}A_{\alpha \alpha }u^{h}A_{ii;h}-\frac{L}{\varepsilon ^{2}}
A_{\alpha \alpha }u_{ii}^{2} \\
&&+\frac{2}{L}u_{\alpha }w_{ii;\alpha }-\frac{2}{L}u_{\alpha }A_{ii;\alpha }-
\frac{2}{L}u_{i\alpha }^{2}+\frac{4L}{\varepsilon ^{2}}u_{ii}u_{i\alpha
}u_{\alpha }u^{i}.
\end{eqnarray*}
Employing (\ref{lem2 temp1}) again, it follows
\begin{eqnarray}
A_{\alpha \alpha ;ii} &=&\frac{L^{2}}{\varepsilon ^{2}}A_{\alpha \alpha
}w_{ii;h}u^{h}+\frac{2}{L}u_{\alpha }w_{ii;\alpha }  \label{lem2 temp3} \\
&&-\frac{L}{\varepsilon ^{2}}A_{\alpha \alpha }u_{ii}^{2}-\frac{2}{L}
u_{i\alpha }^{2}+\frac{4L}{\varepsilon ^{2}}u_{ii}u_{i\alpha }u_{\alpha
}u^{i}  \notag \\
&&+\frac{L^{4}}{\varepsilon ^{4}}A_{\alpha \alpha }A_{ii}u_{ht}u^{h}u^{t}+2
\frac{L}{\varepsilon ^{2}}A_{\alpha \alpha }u_{ii}u_{i}u^{i}  \notag \\
&&+2\frac{L}{\varepsilon ^{2}}A_{ii}u_{\alpha \alpha }u_{\alpha }u^{\alpha }+
\frac{4}{L^{2}}u_{i\alpha }u_{\alpha }u_{i}.  \notag
\end{eqnarray}
Hence
\begin{eqnarray}
\hskip20pt{\sum }_{i,\alpha }w^{ii}A_{\alpha \alpha ;ii}w_{\alpha \alpha }
&=&\frac{L^{2}}{\varepsilon ^{2}}W\sum \varphi _{h}u^{h}+2\frac{L^{2}}{
\varepsilon ^{2}}\sum w_{ii}u^{i}\varphi _{i}  \label{lem2 temp3a} \\
&&-\frac{L}{\varepsilon ^{2}}W\sum w^{ii}u_{ii}^{2}-\frac{2}{L}\sum
u_{ii}^{2}  \notag \\
&&+\frac{L}{\varepsilon ^{2}}\sum \bigg\{
4u_{ii}^{2}u_{i}u^{i}+4u_{ii}u_{i}u^{i}+2Ww^{ii}u_{ii}u_{i}u^{i}\bigg\} 
\notag \\
&&+\frac{L}{\varepsilon ^{2}}\bigg\{W\mathcal{W}\sum u_{ii}u_{i}u^{i}+2
\mathcal{W}\sum w_{ii}u_{ii}u_{i}u^{i}\bigg\}.  \notag
\end{eqnarray}
Recalling (\ref{new1}), 
\begin{equation*}
\sum
\{4u_{ii}^{2}u_{i}u^{i}+4u_{ii}u_{i}u^{i}+2Ww^{ii}u_{ii}u_{i}u^{i}\}\leq Q,
\end{equation*}
and 
\begin{equation*}
W\mathcal{W}\sum u_{ii}u_{i}u^{i}+2\mathcal{W}\sum
w_{ii}u_{ii}u_{i}u^{i}\leq -\bigg\{W\mathcal{W}\sum w_{ii}u_{i}u^{i}+2
\mathcal{W}\sum w_{ii}^{2}u_{i}u^{i}\bigg\}+Q,
\end{equation*}
the second inequality of Lemma \ref{DDA} follows from (\ref{lem2 temp3a}).

From (\ref{lem2 temp3}) it follows that 
\begin{eqnarray*}
A_{\alpha \alpha ;ii}-A_{ii;\alpha \alpha } &=&\frac{L^{2}}{\varepsilon ^{2}}
A_{\alpha \alpha }w_{ii;h}u^{h}+\frac{2}{L}u_{\alpha }w_{ii;\alpha } \\
&&-\frac{L^{2}}{\varepsilon ^{2}}A_{ii}w_{\alpha \alpha ;h}u^{h}-\frac{2}{L}
u_{i}w_{\alpha \alpha ;i} \\
&&-\frac{L}{\varepsilon ^{2}}A_{\alpha \alpha }u_{ii}^{2}+\frac{4L}{
\varepsilon ^{2}}u_{ii}u_{i\alpha }u_{\alpha }u^{i} \\
&&+\frac{L}{\varepsilon ^{2}}A_{ii}u_{\alpha \alpha }^{2}-\frac{4L}{
\varepsilon ^{2}}u_{\alpha \alpha }u_{i\alpha }u_{i}u^{\alpha }.
\end{eqnarray*}
Hence 
\begin{eqnarray}
\sum w^{ii}\left( A_{\alpha \alpha ;ii}-A_{ii;\alpha \alpha }\right) &=&
\frac{\left( n+2\right) L^{2}}{\varepsilon ^{2}}\sum \varphi _{i}u^{i}
\label{lem2 temp3b} \\
&&-\frac{L^{2}}{\varepsilon ^{2}}\mathcal{W}\sum w_{\alpha \alpha ;h}u^{h}-
\frac{2}{L}\sum w^{ii}u_{i}w_{\alpha \alpha ;i}  \notag \\
&&-\frac{nL}{\varepsilon ^{2}}\sum w^{ii}u_{ii}^{2}+\frac{L}{\varepsilon ^{2}
}\mathcal{W}\sum u_{ii}^{2}.  \notag
\end{eqnarray}
Since 
\begin{equation*}
-\frac{nL}{\varepsilon ^{2}}\sum w^{ii}u_{ii}^{2}\geq -\frac{L}{\varepsilon
^{2}}Q
\end{equation*}
and 
\begin{equation*}
\frac{L}{\varepsilon ^{2}}\mathcal{W}\sum u_{ii}^{2}\geq \frac{L}{
\varepsilon ^{2}}\mathcal{W}\sum w_{ii}^{2}-\frac{L}{\varepsilon ^{2}}Q,
\end{equation*}
the first inequality of Lemma \ref{DDA} follows from (\ref{lem2 temp3b}).
\end{proof}

In view of Lemma \ref{DDA}, (\ref{sec3 temp2.4}) can be rewritten in the
form 
\begin{eqnarray}
\sum w^{ij}W_{ij} &\geq &\frac{L}{\varepsilon ^{2}}\mathcal{W}\sum
w_{ii}^{2}+\frac{L}{\varepsilon ^{2}}\mathcal{W}(W\sum
w_{ii}u_{i}u^{i}+2\sum w_{ii}^{2}u_{i}u^{i})  \label{sec3 temp3.1} \\
&&-\frac{L^{2}}{\varepsilon ^{2}}(\mathcal{W}+6)\sum w_{\alpha \alpha
;i}u^{i}+\Re _{\varphi }-\frac{L}{\varepsilon ^{2}}Q,  \notag
\end{eqnarray}
where 
\begin{equation}
\Re _{\varphi }=:-\frac{2L^{2}}{\varepsilon ^{2}}\sum w_{ii}u^{i}\varphi
_{i}+(n+6-W)\frac{L^{2}}{\varepsilon ^{2}}\sum \varphi _{\beta }u^{\beta
}+\sum \varphi _{\alpha \alpha }.  \label{sec3 def R}
\end{equation}

By (\ref{sec3 critical condition}), we have
\begin{equation}
\sum_{\alpha }w_{\alpha \alpha ;k}=\sum A_{\alpha \alpha ;k}w_{\alpha \alpha
}-W\frac{\eta _{k}}{\eta }.  \label{new2}
\end{equation}
It follows from (\ref{lem2 temp1}) that
\begin{eqnarray}
\sum_{\alpha }w_{\alpha \alpha ;k} &=&-W\frac{\eta _{k}}{\eta }-\frac{L^{2}}{
\varepsilon ^{2}}Wu_{kk}u^{k}-\frac{2}{L}w_{kk}u_{kk}u_{k}
\label{sec3 1st critical} \\
&=&-W\frac{\eta _{k}}{\eta }+\frac{L^{2}}{\varepsilon ^{2}}Ww_{kk}u^{k}-
\frac{L^{2}}{\varepsilon ^{2}}Wu^{k}  \notag \\
&&+\frac{2L^{2}}{\varepsilon ^{2}}w_{kk}^{2}u^{k}-\frac{2L^{2}}{\varepsilon
^{2}}w_{kk}u^{k}.  \notag
\end{eqnarray}
Hence, by \eqref{sec3 temp0.2} and \eqref{new1} 
\begin{eqnarray}
-\frac{L^{2}}{\varepsilon ^{2}}(\mathcal{W}+6)\sum w_{\alpha \alpha ;i}u^{i}
&\geq &-\frac{L}{\varepsilon ^{2}}Q+\frac{L^{2}}{\varepsilon ^{2}}W(\mathcal{
\ \ W}+6)\frac{\sum u^{i}\eta _{i}}{\eta }  \label{sec3 temp3.2} \\
&&-\frac{L}{\varepsilon ^{2}}\mathcal{W}(W\sum w_{ii}u^{i}u_{i}+2\sum
w_{ii}^{2}u_{i}u^{i}).  \notag
\end{eqnarray}
Therefore, by inserting (\ref{sec3 temp3.2}) into (\ref{sec3 temp3.1}), we
find that \eqref{sec3 temp1} can be written as 
\begin{eqnarray}
0 &\geq &\frac{L}{\varepsilon ^{2}}\mathcal{W}\sum w_{ii}^{2}+W\sum w^{ij}(
\frac{\eta _{ij}}{\eta }-2\frac{\eta _{i}\eta _{j}}{\eta ^{2}})
\label{sec3 temp3.4} \\
&&+\frac{L^{2}}{\varepsilon ^{2}}W(\mathcal{W}+6)\frac{\sum u^{i}\eta _{i}}{
\eta }+\Re _{\varphi }-\frac{L}{\varepsilon ^{2}}Q.  \notag
\end{eqnarray}
Without loss of generality, we may assume the cut-off function $\eta $
satisfies $|D\eta |^{2}\leq C\eta $ (otherwise we may replace $\eta $ by $
\eta ^{2}$) and $|D^{2}\eta |\leq C$. Hence it follows 
\begin{eqnarray*}
&&W\sum w^{ij}(\frac{\eta _{ij}}{\eta }-2\frac{\eta _{i}\eta _{j}}{\eta ^{2}}
)+\frac{L^{2}}{\varepsilon ^{2}}W(\mathcal{W}+6)\frac{\sum u^{i}\eta _{i}}{
\eta } \\
&\geq &-C\frac{W}{\eta }(\left\vert D^{2}\eta \right\vert +\frac{\left\vert
D\eta \right\vert ^{2}}{\eta })\sum w^{ij}\sigma _{ij}-C\frac{L^{2}}{
\varepsilon ^{2}}\frac{W}{\eta }(\mathcal{W}+1) \\
&\geq &-\frac{L}{\varepsilon ^{2}}Q,
\end{eqnarray*}
where (\ref{sec3 temp0.1}) is used in the last inequality. Therefore (\ref
{sec3 temp3.4}) can be written as 
\begin{equation}
0\geq \frac{L}{\varepsilon ^{2}}\mathcal{W}\sum w_{ii}^{2}+\Re _{\varphi }-
\frac{L}{\varepsilon ^{2}}Q.  \label{sec3 temp3.5}
\end{equation}

\begin{lemma}
\label{remaining term estimation} We have, at $x_{0}$, 
\begin{equation*}
\Re _{\varphi }\geq -\frac{L}{\varepsilon ^{2}}Q.
\end{equation*}
\end{lemma}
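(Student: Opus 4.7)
My plan is to expand $\Re_\varphi$ term-by-term by substituting
\[
\varphi = 2\log\varepsilon - (n+2)\log L + \log f - \log(g\circ T)
\]
and estimating each resulting piece at $x_0$, using the normalizations $A_{ij}=\delta_{ij}$ and $w_{ij}=\lambda_i\delta_{ij}$. First I would compile the compact formulas that arise: from $L=\varepsilon/\sqrt{1-v}$ and $u_{ij}=A_{ij}-w_{ij}=(1-\lambda_i)\delta_{ij}$, one obtains
\[
\frac{L_i}{L}=\frac{L^2}{\varepsilon^2}u^i(1-\lambda_i),
\]
while \eqref{sec2 DT} together with $A^{kl}=\delta_{kl}$ collapses the Jacobian of $T$ at $x_0$ to $T^k_i=\lambda_i\delta_{ki}$. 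These yield a concrete expression for $\varphi_i$ in terms of $u^i$, $\lambda_i$, and bounded data derivatives.

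The decisive algebraic input is the scaling identity $\tfrac{L^2}{\varepsilon^2}u^i=u_i/L$ from \eqref{sec3 temp0.2}, combined with the per-component bound $(u^i)^2\leq\varepsilon^2/L^3$ coming from \eqref{new1}. Together these neutralize every apparently singular factor $L^2/\varepsilon^2$. Applied to the first two summands of $\Re_\varphi$, the products $\tfrac{L^2}{\varepsilon^2}u^i\varphi_i$ and $\tfrac{L^2}{\varepsilon^2}w_{ii}u^i\varphi_i$ reduce to polynomial expressions in $\lambda_i$ of degree at most two, multiplied by bounded data derivatives and by factors of order $L$ or $1/L$. After the $(n+6-W)$ factor and the sum over $i$ of $w_{ii}$ are taken into account, each term fits into the $\tfrac{L}{\varepsilon^2}Q$ envelope, because $W$, $W^2$, and $W\mathcal{W}/\eta$ all appear in the admissible list defining $Q$.

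The main obstacle is the third piece, $\sum_\alpha\varphi_{\alpha\alpha}$, whose expansion requires $L_{\alpha\alpha}$ and $T^k_{\alpha\alpha}$, and hence third derivatives of $u$ through $w_{ij;\alpha}$. To avoid introducing an uncontrolled third-order quantity, I would trace over indices wherever possible and use the differentiated Monge--Amp\`ere identity \eqref{differentiating pde} to rewrite any trace $\sum w^{ij}w_{ij;\alpha}$ as $\varphi_\alpha$; remaining occurrences of $\sum_\alpha w_{\alpha\alpha;i}$ are eliminated via the first-order critical condition \eqref{sec3 1st critical}, producing only cut-off ratios $W\eta_i/\eta$ and already-estimated terms $A_{\alpha\alpha;i}w_{\alpha\alpha}$. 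The symmetric Hessian contribution $\sum_{k,l}(\log g)_{kl}(T)T^k_\alpha T^l_\alpha$ is bounded pointwise by $|DT|^2\leq CW^2$, so it lies in $Q$ as well.

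I expect the arithmetic to be lengthy but mechanical; the delicate check throughout is that no residual term of order exceeding $\tfrac{L}{\varepsilon^2}Q$ survives. This is enforced at every substitution by the twin size bounds $\tfrac{L^2}{\varepsilon^2}(u^i)^2\leq 1/L$ and $|u_i|\leq L^{3/2}/\varepsilon$, which together prevent any product of gradient factors from generating a worse singularity than $1/\varepsilon^2$. Assembling all the pieces then gives $\Re_\varphi\geq -\tfrac{L}{\varepsilon^2}Q$.
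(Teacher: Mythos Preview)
Your overall strategy is the same as the paper's: expand $\varphi$, isolate the third-order contributions coming from $\sum v_{\alpha\alpha}$ and $\sum g_k\nabla_\alpha T^k_\alpha$, and kill them with the first-order critical condition \eqref{sec3 1st critical}. That part is correct.

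There is, however, a real oversight in your bookkeeping. After the linear change of coordinates that normalizes $A_{ij}(x_0)=\delta_{ij}$, the derivatives of the data $f$ and $g$ in the new coordinates are \emph{not} uniformly bounded: one has $\sum_i g_i^2\le C L^3/\varepsilon^2$ and $|g_{\alpha\alpha}|\le |D^2g|\,\sigma_{\alpha\alpha}\le C L/\varepsilon^2$ (this is exactly what the paper records in \eqref{lem3 temp4.4b} and \eqref{lem3 temp4.5}). So your assertion that the Hessian piece $\sum_{k,l}(\log g)_{kl}(T)T^k_\alpha T^l_\alpha$ is bounded by $CW^2$ because ``$(\log g)_{kl}$ is bounded'' is false; the correct bound is $C\tfrac{L}{\varepsilon^2}W^2$, which still lands in $\tfrac{L}{\varepsilon^2}Q$ but for a different reason. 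The same caveat applies everywhere you invoke ``bounded data derivatives'': the contractions like $\sum u^i f_i$, $\sum u^i g_i$, $\sum \sigma^{ij}g_i\eta_j$ are bounded precisely because $\sigma^{ij}$ is the pullback of the Euclidean metric (so these equal Euclidean inner products), not because each $g_i$ is bounded. You need to use this structure explicitly, as in \eqref{lem3 temp4.3.0}--\eqref{lem3 temp4.4}.

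A second, smaller point: the differentiated equation \eqref{differentiating pde} does not enter here. The third-order quantities that appear are $\sum_\alpha w_{k\alpha;\alpha}$ (from $\nabla_\alpha T^k_\alpha$) and $\sum_\alpha u^h u_{\alpha\alpha h}$ (from $v_{\alpha\alpha}$), neither of the form $\sum w^{ij}w_{ij;\alpha}$. The paper handles the first by the index swap $w_{k\alpha;\alpha}=w_{\alpha\alpha;k}+(A_{k\alpha;\alpha}-A_{\alpha\alpha;k})$ (symmetry of $D^3u$ plus Lemma~\ref{DA}) and then applies the critical condition \eqref{new2}; the second is treated the same way. Once these two corrections are incorporated, your outline matches the paper's proof.
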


\begin{proof}
Recalling (\ref{sec2 v}), we have
\begin{equation}
v_{\alpha }=2u^{h}u_{h\alpha }.  \label{lem3 temp2.1}
\end{equation}
Differentiating (\ref{sec3 phi}) gives 
\begin{eqnarray}
\varphi _{\alpha } &=&-(n+2)\frac{L^{2}}{\varepsilon ^{2}}u^{h}u_{h\alpha }+
\frac{f_{\alpha }}{f}-\frac{\nabla _{\alpha }(g\circ T)}{g\circ T},
\label{lem3 temp1a} \\
\varphi _{\alpha \beta } &=&-2(n+2)\frac{L^{4}}{\varepsilon ^{4}}
u^{h}u^{t}u_{h\alpha }u_{t\beta }-\frac{(n+2)}{2}\frac{L^{2}}{\varepsilon
^{2}}v_{\alpha \beta }+\frac{f_{\alpha \beta }}{f}  \label{lem3 temp1b} \\
&&-\frac{f_{\alpha }f_{\beta }}{f^{2}}-\frac{\nabla _{\alpha \beta
}^{2}(g\circ T)}{g\circ T}+\frac{\nabla _{\alpha }(g\circ T)}{g\circ T}\frac{
\nabla _{\beta }(g\circ T)}{g\circ T}.  \notag
\end{eqnarray}
Inserting (\ref{lem3 temp1a}) and (\ref{lem3 temp1b}) into (\ref{sec3 def R}
), we obtain
\begin{eqnarray}
\Re _{\varphi } &\geq &2\frac{L^{2}}{\varepsilon ^{2}}\sum w_{ii}\frac{
\nabla _{i}(g\circ T)}{g\circ T}u^{i}-(n+6-W)\frac{L^{2}}{\varepsilon ^{2}}
\sum \frac{u^{\alpha }\nabla _{\alpha }(g\circ T)}{g\circ T}
\label{lem3 temp1} \\
&&-\sum \frac{\nabla _{\alpha \alpha }^{2}(g\circ T)}{g\circ T}-\frac{(n+2)}{
2}\frac{L^{2}}{\varepsilon ^{2}}\sum v_{\alpha \alpha }-\frac{L}{\varepsilon
^{2}}Q.  \notag
\end{eqnarray}
Differentiating (\ref{lem3 temp2.1}), we obtain
\begin{eqnarray*}
\sum v_{\alpha \alpha } &=&2\sum \sigma ^{th}u_{t\alpha }u_{h\alpha }+2\sum
u^{h}u_{\alpha \alpha h} \\
&=&2\sum \sigma ^{th}u_{t\alpha }u_{h\alpha }+2\sum u^{h}A_{\alpha \alpha
;h}-2\sum u^{h}w_{\alpha \alpha ;h}.
\end{eqnarray*}
By (\ref{sec3 temp0.2}), (\ref{sec3 temp0.3}) and \eqref{new1}, 
\begin{equation*}
\sum \sigma ^{th}u_{t\alpha }u_{h\alpha }=\frac{1}{L}\sum
(u_{ii}^{2}-u_{ii}^{2}u^{i}u_{i})\leq \frac{1}{L}Q.
\end{equation*}
From (\ref{lem2 temp1}), 
\begin{equation*}
\sum u^{i}A_{\alpha \alpha ;i}=-\frac{n+2}{L}u_{ii}u^{i}u_{i}\leq \frac{1}{L}
Q.
\end{equation*}
Also, by (\ref{sec3 1st critical}), 
\begin{eqnarray*}
-\sum u^{k}w_{\alpha \alpha ;k} &=&W\sum \frac{u^{k}\eta _{k}}{\eta }-\frac{1
}{L}(W-2)\sum w_{kk}u_{k}u^{k} \\
&&+\frac{1}{L}Wv-\frac{2}{L}\sum w_{kk}^{2}u_{k}u^{k} \\
&\leq &\frac{1}{L}Q.
\end{eqnarray*}
Therefore we have 
\begin{equation}
\sum v_{\alpha \alpha }\leq \frac{1}{L}Q.  \label{lem3 temp2.2}
\end{equation}
It then follows from (\ref{lem3 temp1})
\begin{eqnarray}
\Re _{\varphi } &\geq &\frac{2L^{2}}{\varepsilon ^{2}}\sum w_{ii}\frac{
\nabla _{i}(g\circ T)}{g\circ T}u^{i}  \label{lem3 temp3} \\
&&-(n+6-W)\frac{L^{2}}{\varepsilon ^{2}}\sum \frac{u^{i}\nabla _{i}(g\circ T)
}{g\circ T}  \notag \\
&&-\sum \frac{\nabla _{\alpha \alpha }^{2}(g\circ T)}{g\circ T}-\frac{L}{
\varepsilon ^{2}}Q.  \notag
\end{eqnarray}

Now we compute $\nabla _{\alpha }(g\circ T)$ and $\sum \nabla _{\alpha
\alpha }^{2}(g\circ T)$. By (\ref{sec2 DT}) we have 
\begin{equation}
\nabla _{\alpha }(g\circ T)=g_{k}T_{\alpha }^{k}=g_{\alpha }w_{\alpha \alpha
}.  \label{lem3 temp4.1}
\end{equation}
By differentiating \eqref{sec2 DT}, we have 
\begin{eqnarray*}
\sum \nabla _{\alpha \alpha }^{2}(g\circ T) &=&\sum g_{kl}T_{\alpha
}^{k}T_{\alpha }^{l}+\sum g_{k}\nabla _{\alpha }T_{\alpha }^{k} \\
&=&\sum g_{k}A^{kl}w_{l\alpha ;\alpha }+\sum g_{\alpha \alpha }w_{\alpha
\alpha }^{2}-\sum g_{k}A_{k\alpha ;\alpha }w_{\alpha \alpha }.
\end{eqnarray*}
Recalling that $A^{kl}=\delta _{kl}$ at $x_{0}$, we have 
\begin{equation*}
A^{kl}w_{l\alpha ;\alpha }=w_{k\alpha ;\alpha }=w_{\alpha \alpha
;k}+A_{k\alpha ;\alpha }-A_{\alpha \alpha ;k}.
\end{equation*}
By (\ref{new2}),
\begin{eqnarray*}
\sum \nabla _{\alpha \alpha }^{2}(g\circ T) &=&\sum g_{k}w_{\alpha \alpha
;k}+\sum g_{k}(A_{k\alpha ;\alpha }-A_{\alpha \alpha ;k}) \\
&&+\sum g_{\alpha \alpha }w_{\alpha \alpha }^{2}-\sum g_{k}A_{k\alpha
;\alpha }w_{\alpha \alpha } \\
&=&-W\sum g_{k}\frac{\eta _{k}}{\eta }+\sum g_{\alpha \alpha }w_{\alpha
\alpha }^{2}+\sum g_{k}(A_{k\alpha ;\alpha }-A_{\alpha \alpha ;k}) \\
&&+\sum g_{k}(A_{\alpha \alpha ;k}-A_{k\alpha ;\alpha })w_{\alpha \alpha }.
\end{eqnarray*}
Using the second formula in Lemma \ref{DA}, we get 
\begin{eqnarray}
\sum \nabla _{\alpha \alpha }^{2}(g\circ T) &=&-W\sum g_{k}\frac{\eta _{k}}{
\eta }+\sum g_{\alpha \alpha }w_{\alpha \alpha }^{2}+\frac{L^{2}}{
\varepsilon ^{2}}(W-n)\sum w_{kk}u^{k}g_{k}  \label{lem3 temp4.2} \\
&&+\frac{L^{2}}{\varepsilon ^{2}}(W-\sum w_{\alpha \alpha }^{2})\sum
u^{k}g_{k}.  \notag
\end{eqnarray}
Inserting (\ref{lem3 temp4.1}) and (\ref{lem3 temp4.2}) into (\ref{lem3
temp3}), we then obtain 
\begin{eqnarray}
(g\circ T)\Re _{\varphi } &\geq &W\sum g_{k}\frac{\eta _{k}}{\eta }+\frac{
2L^{2}}{\varepsilon ^{2}}\sum w_{kk}^{2}g_{k}u^{k}  \label{lem3 key} \\
&&-\frac{6L^{2}}{\varepsilon ^{2}}\sum w_{kk}u^{k}g_{k}-\sum g_{\alpha
\alpha }w_{\alpha \alpha }^{2}  \notag \\
&&-\frac{L^{2}}{\varepsilon ^{2}}(W-\sum w_{\alpha \alpha }^{2})\sum
u^{k}g_{k}-\frac{L}{\varepsilon ^{2}}Q.  \notag
\end{eqnarray}

By (\ref{sec3 temp0b}), 
\begin{eqnarray}
\sum g_{k}\frac{\eta _{k}}{\eta } &=&L\sum (\sigma ^{ij}+\frac{L^{2}}{
\varepsilon ^{2}}u^{i}u^{j})g_{i}\frac{\eta _{j}}{\eta }
\label{lem3 temp4.3.0} \\
&=&\frac{L}{\eta }\big(\sum \sigma ^{ij}g_{i}\eta _{j}+\frac{L^{2}}{
\varepsilon ^{2}}(\sum g_{i}u^{i})(\sum \eta _{j}u^{j})\big).  \notag
\end{eqnarray}
We have 
\begin{equation*}
\sum \sigma ^{ij}g_{i}\eta _{j}=\left\langle Dg,D\eta \right\rangle \leq C,
\end{equation*}
where $D$ is the normal derivative in $\mathbb{R}^{n}$ and $\left\langle
\cdot ,\cdot \right\rangle $ denotes the standard Euclidean metric.
Similarly, $\sum \sigma ^{ij}u_{i}g_{j}=\sum u^{j}g_{j}$, $\sum \sigma
^{ij}u_{i}\eta_{j}=\sum u^{j}\eta_{j}$ and $\sum \sigma ^{ij}g_{i}g_{j}$ are
all bounded by a universal constant $C$. Hence from (\ref{lem3 temp4.3.0}), 
\begin{equation}
\sum g_{k}\frac{\eta _{k}}{\eta }\geq -\frac{L}{\varepsilon ^{2}}\frac{C}{
\eta }.  \label{lem3 temp4.3}
\end{equation}
Employing (\ref{sec3 temp0.2}) and (\ref{new1}), 
\begin{equation}
(u^{k})^{2}=\frac{\varepsilon ^{2}}{L^{3}}u_{k}u^{k}\leq \frac{\varepsilon
^{2}}{L^{3}},  \label{lem3 temp4.4a}
\end{equation}
for any given $k$. Using (\ref{sec3 temp0.3}) then (\ref{sec3 temp0.2}), we
have 
\begin{equation*}
\sum \sigma ^{ij}g_{i}g_{j}=\frac{1}{L}\big(\sum g_{i}^{2}-\frac{L^{3}}{
\varepsilon ^{2}}(\sum u^{i}g_{i})^{2}\big).
\end{equation*}
It implies 
\begin{eqnarray}
\sum g_{i}^{2} &= &L\sum \sigma ^{ij}g_{i}g_{j}+\frac{L^{3}}{\varepsilon ^{2}
}(\sum u^{i}g_{i})^{2}  \label{lem3 temp4.4b} \\
&\leq & C\frac{L^{3}}{\varepsilon ^{2}} .  \notag
\end{eqnarray}
By (\ref{lem3 temp4.4a}) and (\ref{lem3 temp4.4b}) it follows that $
|g_{k}u^{k}|\le C$. Hence 
\begin{equation}
\sum w_{kk}^{2}g_{k}u^{k} \geq -CW^{2}  \label{lem3 temp4.4+}
\end{equation}
and 
\begin{equation}
\sum w_{kk}u^{k}g_{k}\leq CW.  \label{lem3 temp4.4}
\end{equation}
Moreover, in view of (\ref{sec3 temp0.1}) and (\ref{sec3 temp0.2}), 
\begin{equation*}
\sigma _{ii}=L+\frac{L^{3}}{\varepsilon ^{2}}u_{i}u^{i}\leq C\frac{L}{
\varepsilon ^{2}},
\end{equation*}
for any given $i$. Consequently, 
\begin{equation}
\sum g_{\alpha \alpha }w_{\alpha \alpha }^{2}\leq \left\vert
D^{2}g\right\vert \sum \sigma _{\alpha \alpha }w_{\alpha \alpha }^{2}\leq C 
\frac{L}{\varepsilon ^{2}}W^{2}.  \label{lem3 temp4.5}
\end{equation}
By virtue of (\ref{lem3 temp4.3}), (\ref{lem3 temp4.4+}), (\ref{lem3 temp4.4}
) and (\ref{lem3 temp4.5}), we obtain from (\ref{lem3 key}) that 
\begin{eqnarray*}
\Re _{\varphi } &\geq &-\frac{L}{\varepsilon ^{2}(g\circ T)}Q \\
&\geq &-\frac{L}{\varepsilon ^{2}}Q.
\end{eqnarray*}
This completes the proof.
\end{proof}

By Lemma \ref{remaining term estimation} and (\ref{sec3 temp3.5}), we get,
at $x_{0}$, 
\begin{eqnarray*}
0 &\geq &\frac{L}{\varepsilon ^{2}}\mathcal{W}\sum w_{ii}^{2}-C\frac{L}{
\varepsilon ^{2}}\Big(1+\frac{W}{\eta }+W^{2}+\frac{W}{\eta }\mathcal{W}\Big)
\\
&\geq &\frac{L}{\varepsilon ^{2}}\mathcal{W}\frac{W^{2}}{n}-C\frac{L}{
\varepsilon ^{2}}\Big(1+\frac{W}{\eta }+W^{2}+\frac{W}{\eta }\mathcal{W}
\Big) .
\end{eqnarray*}
Multiplying $n\eta ^{2}L$ to both sides of the above inequality, we obtain 
\begin{eqnarray}  \label{sec3 temp4}
0 &\geq &\frac{L^{2}}{\varepsilon ^{2}}\mathcal{W}(H^{2}-CH) -C\frac{L^{2}}{
\varepsilon ^{2}}(1+H^{2}) \\
&\geq & C\frac{L^{2}}{\varepsilon ^{2}}\mathcal{W}H^{2} -C\frac{L^{2}}{
\varepsilon ^{2}}(1+H^{2}).  \notag
\end{eqnarray}
Note that by (\ref{sec3 order for eigenvalues}), 
\begin{equation}
\mathcal{W} \geq \sum_{k\geq 2}\frac{1}{\lambda _{k}} \geq \Big(\prod_{k\geq
2} \frac{1}{\lambda _{k}}\Big)^{\frac{1}{n-1}} \geq C\lambda _{1}^{\frac{1}{
n-1} } \geq C\Big(\frac{W}{n}\Big)^{\frac{1}{n-1}},  \label{sec3 temp5}
\end{equation}
where $C$ is independent of $\varepsilon$. Hence from (\ref{sec3 temp4}) we
get 
\begin{equation}  \label{sec3 temp6}
0 \geq \frac{L^{2}}{\varepsilon ^{2}}H^{2+\frac{1}{n-1}} -C\frac{L^{2}}{
\varepsilon ^{2}}(1+H^{2}) .
\end{equation}
Hence $H\le C$ at $x_0$ and this completes the proof of Theorem \ref{main
estimate}.

\vskip30pt

\section{A counterexample to the Lipschitz regularity}

In the last section we proved that the eigenvalues of $DT_{\varepsilon }$
are uniformly bounded. In this section we give an example to show that the
$T_\varepsilon$ is not uniformly Lipschitz continuous for small $
\varepsilon>0$, i.e., the matrix $DT_{\varepsilon }$ is not uniformly
bounded, even though the densities $f$ and $g$ are smooth and positive, and
the domain $\Omega ^{\ast }$ is c-convex with respect to $\Omega$. 

Our counterexample will be obtained by finding a choice of $f$ and $g$ such that the monotonic optimal transport $T_0$ between them is not Lipschitz continuous. Even if we said that the convergence $T_\ve\to T_0$ is not straightforward, we can prove that a uniform Lipschitz bound on $T_\ve$ would imply such a convergence, and hence the same bound on $T_0$. Hence, if $T_0$ is not Lipschitz, then $T_\ve$ cannot be uniformly Lipschitz.

\begin{lemma}
Suppose that the sequence of transports $T_\ve$ is uniformly Lipschitz. Then the whole family $T_\ve$ converges uniformly as $\ve\to 0$ to the unique monotonic optimal transport for the cost $|x-y|$, which will be Lipschitz with the same Lipschitz constant.
\end{lemma}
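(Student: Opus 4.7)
\emph{Proof proposal.} The plan is to extract a uniform limit by Arzel\`a--Ascoli, verify that this limit is an optimal transport for the cost $|x-y|$, and identify it with the monotonic optimal transport using the structure inherited from the $c_\ve$-optimality; uniqueness of the latter will force convergence of the full family.

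\textbf{Step 1 (Compactness).} The family $\{T_\ve\}$ is equicontinuous (uniformly Lipschitz by hypothesis) and uniformly bounded (values in $\bom^*$). By Arzel\`a--Ascoli, any sequence $\ve_k\to 0$ has a subsequence (not relabelled) with $T_{\ve_k}\to\hat T$ uniformly on $\bom$, where $\hat T$ is Lipschitz with the same constant $K$.

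\textbf{Step 2 (Preservation of the transport constraint).} For every $\phi\in C(\bom^*)$, uniform convergence gives $\int_\Omega \phi(T_{\ve_k}(x))f(x)\,dx\to\int_\Omega\phi(\hat T(x))f(x)\,dx$, while the left-hand side equals $\int_{\Omega^*}\phi g$ for every $k$. Hence $\hat T_\#f=g$.

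\textbf{Step 3 (Optimality for the cost $|x-y|$).} Since $0\le c_\ve(x,y)-|x-y|\le \ve$, for any competing map $S$ with $S_\#f=g$,
\[
\int c_{\ve_k}(x,T_{\ve_k}(x))f\,dx\le \int c_{\ve_k}(x,S(x))f\,dx.
\]
Taking $k\to\infty$, using the uniform convergence of $T_{\ve_k}$ and of $c_{\ve_k}$ on bounded sets, yields $\int|x-\hat T(x)|f\,dx\le\int|x-S(x)|f\,dx$, so $\hat T$ is optimal for the cost $|x-y|$.

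\textbf{Step 4 (Identification with the monotonic optimal transport).} The potentials $u_\ve$ satisfy $|Du_\ve|\le 1$ by (\ref{sec2 v invariant under coord}), hence are uniformly Lipschitz; after a suitable additive normalization and a further subsequence, $u_\ve\to u_0$ uniformly, with $u_0$ a Kantorovich potential for $|x-y|$. On the subset where $|Du_0|=1$ (i.e.\ the interior of the transfer set), the transport direction $e_\ve=-Du_\ve/|Du_\ve|$ converges to $e_0=-Du_0/|Du_0|$, which is the direction of the transport ray of $u_0$. The strict convexity of $y\mapsto c_\ve(x,y)$, combined with $c_\ve$-cyclical monotonicity of $T_\ve$, forces $T_\ve$ restricted to any line in the direction $e_\ve$ to be monotone (as a scalar map of the linear parameter). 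This directional monotonicity passes to the limit along each transport ray of $u_0$, so $\hat T$ is monotone along transport rays. Combined with optimality for $|x-y|$, this identifies $\hat T$ with the monotonic optimal transport $T_0$ (see \cite{Am,FM}).

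\textbf{Step 5 (Full convergence).} Since every uniform subsequential limit equals the same map $T_0$, the whole family $T_\ve$ converges uniformly to $T_0$, and $T_0$ inherits the Lipschitz constant $K$.

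The main obstacle is Step 4: one must extract a well-defined limit potential $u_0$ and show that the directional monotonicity stemming from the strict convexity of $c_\ve$ survives the limit, so as to single out the monotonic optimal transport among all $c_0$-optimal maps. The remaining steps (compactness, stability of the pushforward, and stability of optimality under uniform convergence of the costs) are standard; it is the selection of the monotonic map in the limit that carries the whole content of the lemma.
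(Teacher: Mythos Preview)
Your Steps 1--3 and 5 are correct and essentially match the paper's argument. The gap is entirely in Step~4, and it is a genuine one.

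The assertion that ``strict convexity of $c_\ve$, combined with $c_\ve$-cyclical monotonicity, forces $T_\ve$ restricted to any line in the direction $e_\ve$ to be monotone'' is not justified and, as written, not even well-posed: the direction $e_\ve(x)=-Du_\ve(x)/|Du_\ve(x)|$ varies with $x$, so there is no single line ``in the direction $e_\ve$'' along which to test monotonicity. Even if you fix a transport ray $\ell$ of $u_0$ and test two points $x,x'\in\ell$, the images $T_\ve(x),T_\ve(x')$ are \emph{not} on $\ell$ for $\ve>0$; the two-point cyclical monotonicity inequality for $h_\ve(z)=\sqrt{\ve^2+|z|^2}$ then involves four non-collinear points, and strict convexity of $h_\ve$ does not translate into $(T_\ve(x')-T_\ve(x))\cdot(x'-x)\ge 0$. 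At the limit $\ve\to 0$ the four points do become collinear, but simultaneously $h_\ve\to|\cdot|$ loses strict convexity, so the inequality you recover is vacuous in one dimension (both monotone increasing and decreasing maps satisfy it). Making this work would require a quantitative trade-off between the strict convexity of $h_\ve$ (of order $\ve$) and the deviation from collinearity of the images, which you do not supply.

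The paper avoids this issue altogether. It reformulates ray-monotonicity of the limit map as the pointwise inequality $DL\cdot Du\le 1$, where $L(x)=|T(x)-x|$, and proves the corresponding inequality $DL_\ve\cdot Du_\ve\le |Du_\ve|^2\le 1$ for each $\ve>0$ by an explicit computation using $T_\ve=x-L_\ve Du_\ve$ and the positivity of the matrix $w_{ij}=A_{ij}-D^2u_\ve$. The hypothesis of uniform Lipschitz continuity of $T_\ve$ enters precisely here: it makes $L_\ve$ uniformly Lipschitz, hence $DL_\ve$ converges weak-$*$ in $L^\infty$. On the set $\{|Du_0|=1\}$ one has strong $L^2$ convergence $Du_\ve\to Du_0$ (weak convergence plus $|Du_\ve|\le 1=|Du_0|$ upgrades to strong), and the product $DL_\ve\cdot Du_\ve$ passes to the limit. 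Your sketch does not identify where the Lipschitz hypothesis is actually used beyond compactness; in the paper's proof it is essential for the weak convergence of $DL_\ve$.
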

\begin{proof}
By Ascoli-Arzel\`a's Theorem, the uniform Lipschitz bound implies the existence of a uniform limit up to subsequences. Obviously this limit map $T$ will be optimal for the limit problem, i.e. the Monge problem for cost $c(x,y)=|x-y|$ and will share the same Lipschitz constant as $T_\ve$.

We only need to prove that $T$ is monotonic along transport rays. Take $L_\ve(x)=\sqrt{\ve^2+|T_\ve(x)-x|^2}$: these maps are also uniformly Lipschitz and converge uniformly to $L(x)=|T_\ve(x)-x|$. Let us denote by $u_\ve$ the potentials for the approximated problems and by $u$ the potential for the limit problem. Due to the uniqueness of the Kantorovich potential $u$, since all the functions $u_\ve$ are $1-$Lipschitz, we have $u_\ve\to u$ uniformly. Moreover, $Du_\ve\deb Du$ and the convergence is actually strong (in $L^2$, for instance) if restricted to the set $Tu=\{|Du|=1\}$ (as a consequence of $|Du_\ve|\leq 1$, which implies that we also have $\int_{Tu} |Du_\ve|^2\to \int_{Tu} |Du|^2$: this turns weak convergence into strong, and hence also implies pointwise, convergence). 

The monotonicity of $T$ is proven if one proves $DL\cdot Du\leq 1$, since the direction of the transport rays is that of $-Du$. This inequality is needed on the set of interior points of transport rays, which are exactly points where $|Du|=1$. On these points we can use the weak convergence $DL_\ve\deb DL$ (weakly-* in $L^\infty$) and the strong convergence $Du_\ve\to Du$, which means that it is enough to get $DL_\ve\cdot Du_\ve\leq 1$, and then pass the inequality to the limit. This is the point where we use the uniform Lipschitz bound on $T_\ve$: without such a bound we could not have the suitable weak convergence of $DL_\ve$.

In order to estimate $DL_\ve$, we use \eqref{sec2 L invariant under coord} and \eqref{sec2 T}. We come back to the notation without the index $\ve$, and write $DL$, thus getting
$$DL\cdot Du=-D_iu \,(T^i_j-\delta_{ij})\,D_j u=L D_iu\, D^2_{ij}u\, D_ju+|Du|^2\, DL\cdot Du.$$
Then, we use \eqref{sec2 matrixA} and \eqref{sec2 matrixW} and the positivity of the matrix $w_{ij}$, to get
$$L\, D_iu \,D^2_{ij}u \,D_ju\leq |Du|^2\,(1-|Du|^2).$$
This implies 
$$(1-|Du|^2)\,DL\cdot Du\leq |Du|^2\,(1-|Du|^2),$$
which provides $DL\cdot Du\leq  |Du|^2\leq 1$ (notice that, for fixed $\ve>0$, the norm of the gradient $|Du|$ is strictly less than $1$, which allows to divide by $1-|Du|^2$).
\end{proof}

To construct the counterexample where $T_0$ is not Lipschitz, our idea is as follows. Let 
\begin{equation}  \label{sec4 eg1 segments}
\ell _{a}=\{(x,y)\ \text{in}\ \mathbb{R}^{2}\ |\ \mathbb{\ }y=\sqrt{a}\left(
x+2+a\right) ,x\in \left[ -2-a,1\right] \}
\end{equation}
be a family of line segments $\ell _{a}$, where $a\in \lbrack 0,1]$. It is
clear that the segments $\ell _{a}$ do not intersect with each other and $
\cup_{a\in \left[ 0,1\right] }\ell _{a}=\Delta _{ABC}$, where $\Delta _{ABC}$
denotes the triangle with vertices $A=(-3,0),B=(1,4)$ and $C=(1,0)$. Let 
\begin{eqnarray*}
f &\equiv& 1, \\
g &=& 1+\frac{1}{4}x+\eta \left( y\right)
\end{eqnarray*}
be two densities on $\Delta _{ABC}$. We first show that there exists a
smooth positive function $\eta $ such that $f, g$ satisfy the mass balance
condition 
\begin{equation}
\int_{\Delta _{P_{a}CQ_{a}}}f=\int_{\Delta _{P_{a}CQ_{a}}}g,\;\text{ for all 
 }a\in \lbrack 0,1].  \label{sec4 mass balance}
\end{equation}
Here $P_{a}=\left( -2-a,0\right) $ and $Q_{a}=\left( 1,\left( 3+a\right) 
\sqrt{a}\right) $ are the endpoints of $\ell _{a}$. We then prove that there
is a Lipschitz function $u$, which is the potential function to Monge's
problem in $\Delta _{ABC}$, with the densities $f, g$ given above. By (\ref
{sec4 mass balance}) we can construct a measure preserving mapping $T_0$,
which pushes the density $f$ to the density $g$, with $\{\ell_a\}$ as its
transfer rays. Using the potential $u$ and the duality we show that $T_0$ is
the optimal mapping of Monge's problem. By reflection in the $x$-axis, we
extend $T_0$ to the triangle $\Delta _{ABB^{\prime }}$, where $B^{\prime
}=\left( 1,-4\right)$ is the reflection of $B$. Then $T_0$ is not Lipschitz
at the interior point $(-2, 0)$.

\begin{lemma}
\label{densities}There exists a smooth positive function $\eta $, such that
( \ref{sec4 mass balance}) holds. This function satisfies $\eta(y)=O(y^2)$ as $y\to 0$.
\end{lemma}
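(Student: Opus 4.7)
The strategy is to recast the one-parameter family of mass-balance conditions \eqref{sec4 mass balance} as a single Volterra-type integral equation for $\eta$, which can then be solved explicitly by two differentiations.

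Setting $s=\sqrt{a}$ and $Y:=(3+a)\sqrt{a}=3s+s^3$, I would parametrize the triangle $\Delta_{P_aCQ_a}$ by $y\in[0,Y]$ and $x\in[y/s-2-s^2,\,1]$, so that the horizontal slice at height $y$ has length $(Y-y)/s$. A direct computation of $\int_{\Delta_{P_aCQ_a}}\frac{x}{4}\,dx\,dy$ (an elementary integral yielding a polynomial in $s$) then shows that \eqref{sec4 mass balance} is equivalent to the single equation
\begin{equation*}
\int_0^{Y}(Y-y)\,\eta(y)\,dy=\frac{s^{4}(9+6s^{2}+s^{4})}{24}=:R(Y),\qquad s\in[0,1].
\end{equation*}
Because $dY/ds=3(1+s^{2})>0$, the map $s\mapsto Y$ is a smooth diffeomorphism of $[0,1]$ onto $[0,4]$, so $R$ is smooth in $Y$ with $R(0)=R'(0)=0$.

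Setting $G(Y):=\int_0^Y(Y-y)\eta(y)\,dy$ one checks $G''(Y)=\eta(Y)$, so the unique continuous solution is $\eta:=R''$. Computing this derivative (using $s'(Y)=1/(3(1+s^{2}))$, and the decomposition $9s^{3}+9s^{5}+2s^{7}=s^{3}[9(1+s^{2})+2s^{4}]$ to simplify $R'(Y)=\frac{s^{3}}{2}+\frac{s^{7}}{9(1+s^{2})}$) gives the closed form
\begin{equation*}
\eta(Y)=\frac{s^{2}}{2(1+s^{2})}+\frac{s^{6}(7+5s^{2})}{27(1+s^{2})^{3}},
\end{equation*}
evaluated at $s=s(Y)$.

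From this closed form every remaining claim follows at once. Smoothness of $\eta$ follows from smoothness of $s(Y)$; both summands above are nonnegative and the first is strictly positive for $s>0$, so $\eta\ge 0$ on $[0,4]$, which combined with $1+x/4\ge 1/4$ on $\Delta_{ABC}$ gives $g>0$. The expansion near $s=0$ reads $\eta(Y)=s^{2}/2+O(s^{4})=Y^{2}/18+O(Y^{4})$, yielding $\eta(y)=O(y^{2})$ as $y\to 0$. Moreover $\eta$ depends only on $s^{2}$ while $Y$ is odd in $s$, so $\eta$ is an even function of $Y$: the reflection-symmetric extension to $y<0$ needed in Section~4 is automatically $C^{\infty}$ across $y=0$. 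The only delicate point in the proof is careful bookkeeping in the change of variables; no deeper ingredient is required.
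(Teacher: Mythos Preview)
Your argument is correct and arrives at exactly the same closed form as the paper (after substituting $a=s^{2}$, your formula coincides with the paper's $\eta=\frac{a(10a^{3}+41a^{2}+54a+27)}{54(a+1)^{3}}$), but the route is genuinely cleaner. The paper integrates over vertical slices, differentiates once in the parameter $a$, changes variable to $y=(3+a)\sqrt a$, and differentiates again, which produces a first-order linear ODE $\eta'+\frac{q}{y}\eta=yp$ that is then solved by an integrating factor and only afterwards simplified to a polynomial-in-$a$ expression. Your horizontal-slice parametrization collapses the double integral at once to the Volterra kernel $(Y-y)$, i.e.\ to the second antiderivative of $\eta$; the equation $G(Y)=R(Y)$ with $G(0)=R(0)=G'(0)=R'(0)=0$ then gives $\eta=R''$ directly, with no ODE to solve. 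This also makes the evenness of $\eta$ in $Y$ (hence the smoothness of the reflected extension) transparent, whereas the paper has to invoke the explicit inverse $a(y)$ in a separate remark. One small wording issue: the lemma says ``positive'' but your formula (like the paper's) gives $\eta(0)=0$; what you actually need, and correctly prove, is $\eta\ge 0$ so that $g>0$.
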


\begin{proof}
By direct computation, 
\begin{eqnarray*}
\int_{\Delta _{P_{a}CQ_{a}}}f &=&\frac{1}{2}\sqrt{a}\left( 3+a\right) ^{2},
\\
\int_{\Delta _{P_{a}CQ_{a}}}g &=&\int_{-2-a}^{1}\int_{0}^{\sqrt{a}\left(
x+2+a\right) }\left( 1+\frac{1}{4}x +\eta \left( y\right) \right) dydx \\
&=&\frac{\sqrt{a}}{24}\left( 3+a\right) ^{2}\left( 12-a\right)
+\int_{-2-a}^{1}\int_{0}^{\sqrt{a}\left( x+2+a\right) }\eta \left( y\right)
dydx.
\end{eqnarray*}
In order that (\ref{sec4 mass balance}) holds, we need 
\begin{equation}
\frac{1}{24}a^{3/2}\left( 3+a\right) ^{2}=\int_{-2-a}^{1}\int_{0}^{\sqrt{a}
\left( x+2+a\right) }\eta \left( y\right) dydx.  \label{sec4 lem1 temp1}
\end{equation}

Differentiating (\ref{sec4 lem1 temp1}) with respect to $a$, we have 
\begin{equation*}
\frac{a}{24}\left( 9+7a\right) \left( 3+a\right) =\int_{-2-a}^{1}\left(
x+2+3a\right) \eta \left( \sqrt{a}\left( x+2+a\right) \right) dx
\end{equation*}
which is equivalent to 
\begin{equation}
\frac{a^{2}}{24}\left( 9+7a\right) \left( 3+a\right) =\int_{0}^{\left(
3+a\right) \sqrt{a}}\left( t+2a\sqrt{a}\right) \eta \left( t\right) dt.
\label{sec4 lem1 temp2}
\end{equation}
In order to find $\eta $ satisfying (\ref{sec4 lem1 temp1}) for all $a\in 
\left[ 0,1\right] $, we only need to solve (\ref{sec4 lem1 temp2}), since the equality in \eqref{sec4 lem1 temp1} is true for $a=0$.

Let us introduce 
\begin{equation}
y=\left( 3+a\right) \sqrt{a}.  \label{sec4 lem1 temp3}
\end{equation}
It is clear that $y$ is a strictly increasing function of $a$. Let $
a(y)=O(y^2)$ be the inverse function of \eqref{sec4 lem1 temp3}.
Differentiating (\ref{sec4 lem1 temp2}) in $y$ and using $a_{y}=\frac{2\sqrt{
a}}{3\left( a+1\right) }$, we obtain 
\begin{equation*}
\frac{\sqrt{a}}{36}\left( 27+45a+14a^{2}\right) =\frac{3\left( 1+a\right)
^{2}}{2\sqrt{a}}\eta \left( y\right) +\int_{0}^{y}\eta \left( t\right) dt,
\end{equation*}
Taking derivative again, we obtain 
\begin{equation}
\eta ^{\prime }\left( y\right) +\frac{q\left( a\left( y\right) \right) }{y}
\eta \left( y\right) =yp\left( a\left( y\right) \right) ,
\label{sec4 lem1 temp4}
\end{equation}
where 
\begin{eqnarray*}
q\left( a\right) &=&\frac{\left( 5a-1\right) \left( 3+a\right) }{3\left(
1+a\right) ^{2}}, \\
p\left( a\right) &=&\frac{27+135a+70a^{2}}{162\left( 1+a\right) ^{3}\left(
3+a\right) }.
\end{eqnarray*}
Solving (\ref{sec4 lem1 temp4}), one finds an explicit formula for $\eta$:
\begin{equation}
\eta \left( y\right) =\int_{0}^{y}t\,p\left( a\left( t\right) \right) \exp
\left( -\int_{t}^{y}\frac{q\left( a\left( \tau \right) \right) }{\tau }d\tau
\right) dt.  \label{sec4 lem1 temp5}
\end{equation}
It is clear that 
\begin{equation*}
-1\leq q\left( a\left( y\right) \right) \leq 0\text{ if }|y| <<1.
\end{equation*}
Hence 
\begin{eqnarray*}
0\leq \eta \left( y\right) &\leq &C\int_{0}^{y}t\exp \left( \int_{t}^{y}\frac{1}{
\tau }d\tau \right) dt \\
&\leq &Cy^{2}.
\end{eqnarray*}

From (\ref{sec4 lem1 temp5}) it follows that 
\begin{eqnarray}
  \eta \left( y\right) 
  &=&\int_{0}^{y}t\, p\left( a\left( t\right) \right) \exp
\left( -\frac{1}{2}\int_{a\left( t\right) }^{a\left( y\right) }\frac{5a-1}{
a\left( 1+a\right) }da\right) dt  \label{sec4 lem1 temp6} \\
&=&\frac{\sqrt{a}}{324\left( a+1\right) ^{3}}\int_{0}^{a}\frac{3\left(
s+1\right) \left( 27+135s+70s^{2}\right) }{\sqrt{s}}ds  \notag \\
&=&\frac{a\left( 10a^{3}+41a^{2}+54a+27\right) }{54\left( a+1\right) ^{3}}. 
\notag
\end{eqnarray}
In the last two equalities, $a$ is the function of $y$ determined by (\ref
{sec4 lem1 temp3}). Therefore $\eta $ is positive and smooth and satisfies the required conditions.
\end{proof}

\begin{remark}
\label{extension}From (\ref{sec4 lem1 temp3}), we can explicitly write 
\begin{equation*}
a\left( y\right) =h\left( y\right) +\frac{1}{h\left( y\right) }-2,
\end{equation*}
where 
\begin{equation*}
h\left( y\right) =\sqrt[3]{\sqrt{\frac{1}{4}y^{4}+y^{2}}+\frac{1}{2}y^{2}+1} 
\text{.}
\end{equation*}
It is clear that $a\left( y\right) $ is a smooth even function.
\end{remark}

\begin{lemma}
\label{potential}There exists a function $u:\Delta _{ABC}\rightarrow \mathbb{
\ \ R}$ satisfying 
\begin{equation*}
| u\left( p\right) -u\left( q\right) | \leq | p-q | ,\text{ }\forall \text{ }
p,q\in \Delta _{ABC},
\end{equation*}
and equality holds if and only if both $p$ and $q$ lie on a common segment $
\ell _{a}$.
\end{lemma}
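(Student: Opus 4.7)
The plan is to construct $u$ explicitly as a signed arc-length along each ray $\ell_a$, measured from an auxiliary curve $\gamma$ orthogonal to the family of rays; this makes the level sets of $u$ orthogonal to the rays, so that $|\nabla u|\equiv 1$ in the interior and the equality in the Lipschitz estimate along rays is automatic. I would parametrize $\Delta_{ABC}$ by $X(a,s)=P_a+s\mathbf{v}_a$ for $(a,s)\in[0,1]\times[0,(3+a)\sqrt{1+a}]$, where $P_a=(-2-a,0)$ and $\mathbf{v}_a=(1,\sqrt{a})/\sqrt{1+a}$ is the unit direction of $\ell_a$; since the rays are pairwise disjoint and fill $\Delta_{ABC}$ (as one checks by computing that any two lines $\ell_a,\ell_{a'}$ meet at a point with $x<-2-\max(a,a')$), this is a bijection. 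Seeking an auxiliary curve $\gamma(a)=P_a+t(a)\mathbf{v}_a$ with $\gamma'(a)\perp\mathbf{v}_a$ yields the ODE $t'(a)=-P_a'\cdot\mathbf{v}_a=1/\sqrt{1+a}$, whose solution normalized by $t(0)=0$ is $t(a)=2\sqrt{1+a}-2$. Defining
\[
u(X):=s(X)-2\sqrt{1+a(X)}+2
\]
then yields the signed arc-length along $\ell_{a(X)}$ from $\gamma(a(X))$ to $X$.

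The crucial verification is $|\nabla u|\equiv 1$ in the interior. I would compute the pullback of the Euclidean metric in $(a,s)$-coordinates, obtaining $g_{ss}=1$, $g_{as}=-1/\sqrt{1+a}$, and $g_{aa}=1+s/(1+a)^{3/2}+s^{2}/(4a(1+a)^{2})$, so $\det g=g_{aa}-1/(1+a)$. Since $u_{s}=1$ and $u_{a}=-1/\sqrt{1+a}$, direct substitution into $|\nabla u|^{2}=g^{ij}u_{i}u_{j}$ gives
\[
|\nabla u|^{2}=\frac{u_{a}^{2}-2u_{a}/\sqrt{1+a}+g_{aa}}{\det g}=\frac{g_{aa}-1/(1+a)}{\det g}=1.
\]
Hence $u$ is $1$-Lipschitz in the interior, and extends to $\Delta_{ABC}$ by continuity. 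The equality $|u(p)-u(q)|=|p-q|$ for $p,q\in\ell_a$ is immediate since $u$ is affine in $s$ along $\ell_a$; conversely, if $p\in\ell_a$ and $q\in\ell_{a'}$ with $a\neq a'$, then $\nabla u(X)$ points along the unit ray direction $\mathbf{v}_{a(X)}$, which varies continuously and takes distinct values at $p$ and $q$, so $\nabla u\cdot(q-p)/|q-p|<1$ on a subsegment of positive length along $\overline{pq}$, giving the strict inequality $|u(p)-u(q)|<|p-q|$.

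The main technical obstacle will be the formal $1/\sqrt{a}$ singularity at $a=0$ appearing in $g_{aa}$ and in $\mathbf{v}_a'$. The cleanest resolution is to reparametrize with $b=\sqrt{a}$, in which $\mathbf{v}_b=(1,b)/\sqrt{1+b^2}$ and all expressions (the pullback metric, $u$, and its gradient) become smooth across $b=0$; a parallel computation in $(b,s)$-coordinates shows that $\det g$ is a perfect square and that $|\nabla u|=1$ persists, so $u$ is smooth and $1$-Lipschitz in a full neighborhood of $\ell_0$, and the continuity across the cusp at $P_0=(-2,0)$ (where the $u$-level set $\gamma$ meets the bottom edge) is then verified by hand using the explicit formula $u(x,0)=-2\sqrt{-x-1}+2$ for $x\in[-3,-2]$ and $u(x,0)=x+2$ for $x\in[-2,1]$.
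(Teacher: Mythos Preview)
Your argument is correct and takes a genuinely different route from the paper. The paper observes directly that the unit direction field $\nu(x,y)=-(1,\sqrt{a})/\sqrt{1+a}$ is curl-free on $\Delta_{ABC}\setminus\ell_0$ (a single-line computation using $a_y+a_x/\sqrt a=0$), defines $u$ as the line integral of $\nu$ from a fixed base point, and then checks $Du=\nu$ by differentiating under the integral sign; the $1$-Lipschitz property and the equality characterization follow at once from $|\nu|\equiv 1$. Your construction instead produces the orthogonal trajectory $\gamma(a)=P_a+(2\sqrt{1+a}-2)\mathbf v_a$ explicitly and defines $u$ as signed arc-length from it, verifying $|\nabla u|=1$ by a pullback-metric computation in $(a,s)$-coordinates. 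The two approaches are dual: the curl-free condition is precisely the integrability condition guaranteeing the existence of your orthogonal foliation, so the content is the same. What you gain is a closed-form potential $u=s-2\sqrt{1+a}+2$ and a presentation that would transfer to any smooth ray foliation; what the paper gains is brevity, since the curl computation replaces the full metric tensor calculation, and it sidesteps the need to compute $g_{aa}$ with its $1/a$ term (the paper's singular integral at $a=0$ is handled by a one-line limiting remark rather than a reparametrization). Both treatments of the strict-inequality case are identical in spirit: $\nabla u$ equals the ray direction, which varies along any segment joining distinct rays.
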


\begin{proof}
We will construct a function $u:\Delta _{ABC}\rightarrow \mathbb{R}$, which
decreases linearly along all $\ell _{a}$.

For $\left( x,y\right) \in \Delta _{ABC}$, let $a=a\left( x,y\right) $ be
the solution of the equation 
\begin{equation}
y=\sqrt{a}\left( a+2+x\right) \text{.}  \label{sec4 tempa1}
\end{equation}
Hence $\left( x,y\right) \in \ell _{a}$. Differentiating (\ref{sec4 tempa1})
with respect to $x$ and $y$ respectively, we get 
\begin{equation}
0=\frac{a_{x}}{2a}y+\sqrt{a}\left( a_{x}+1\right)  \label{sec4 tempa2}
\end{equation}
and 
\begin{equation}
1=\frac{a_{y}}{2a}y+\sqrt{a}a_{y}\text{.}  \label{sec4 tempa3}
\end{equation}
It follows by (\ref{sec4 tempa2}) and (\ref{sec4 tempa3}) that 
\begin{equation}
a_{y}+\frac{a_{x}}{\sqrt{a}}=0  \label{sec4 tempa4}
\end{equation}
provided $a\left( x,y\right) \not=0$.

On the other hand, for $\left( x,y\right) \in \Delta _{ABC}$, the direction
vector of the line segment $\ell _{a}$ passing through $\left( x,y\right) $ is
given by 
\begin{equation}
\nu \left( x,y\right) =\left( \nu _{1},\nu _{2}\right) =-\frac{\left( 1, 
\sqrt{a\left( x,y\right) }\right) }{\sqrt{1+a\left( x,y\right) }}\text{.}
\label{sec4 mu}
\end{equation}
Hence, by (\ref{sec4 tempa4}), 
\begin{equation}
\partial _{y}\nu _{1}-\partial _{x}\nu _{2}=\frac{1}{2\left( 1+a\right)
^{3/2}}\left( a_{y}+\frac{a_{x}}{\sqrt{a}}\right) =0\text{,}
\label{sec4 tempa5}
\end{equation}
provided $a\left( x,y\right) \not=0$.

Fix a point $P=\left( -2,1\right) $. Let 
\begin{eqnarray*}
\gamma \left( t\right) &=&\gamma _{X}\left( t\right) \\
&=&\left( t\left( x+2\right) -2,1-t\left( 1-y\right) \right) ,
\end{eqnarray*}
$t\in \left[ 0,1\right] $. Then $\gamma $ is the segment joining $P$ and $
X=\left( x,y\right) \in \Delta _{ABC}$. Set 
\begin{equation*}
u\left( x,y\right) =\left( x+2\right) \int_{0}^{1}\nu _{1}\left( \gamma
\left( t\right) \right) dt+\left( y-1\right) \int_{0}^{1}\nu _{2}\left(
\gamma \left( t\right) \right) dt\text{.}
\end{equation*}
We claim that $u$ satisfies 
\begin{equation}
Du\left( x,y\right) =\nu \left( x,y\right) \ \text{on all segments}\ \ell_a.
\label{sec4 lem2 key}
\end{equation}
Indeed, for any point $X_{0}=\left( x_{0},y_{0}\right) \in \Delta _{ABC}$
with $a\left( x_{0},y_{0}\right) \not=0$, by (\ref{sec4 tempa5}) we have 
\begin{eqnarray}
u_{x}\left( x_{0},y_{0}\right) &=&\int_{0}^{1}\nu _{1}\left( \gamma
_{0}\left( t\right) \right) dt+\left( x_{0}+2\right) \int_{0}^{1}t\partial
_{x}\nu _{1}\left( \gamma _{0}\left( t\right) \right) dt  \label{sec4 du} \\
&&+\left( y_{0}-1\right) \int_{0}^{1}t\partial _{x}\nu _{2}\left( \gamma
_{0}\left( t\right) \right) dt  \notag \\
&=&\int_{0}^{1}\nu _{1}\left( \gamma _{0}\left( t\right) \right)
dt+\int_{0}^{1}t\frac{d}{dt}\nu _{1}\left( \gamma _{0}\left( t\right)
\right) dt  \notag \\
&=&\int_{0}^{1}\frac{d}{dt}\left(t\nu _{1}\left( \gamma _{0}\left( t\right)
\right)\right) dt =
\nu _{1}\left( x_{0},y_{0}\right) ,  \notag
\end{eqnarray}
where $\gamma _{0}=\gamma _{X_{0}}$ and we used $\partial_x\nu_2=\partial_y\nu_1$. Similarly, we have 
\begin{equation*}
u_{y}\left( x_{0},y_{0}\right) =\nu _{2}\left( x_{0},y_{0}\right) .
\end{equation*}
Taking limit, we see that \eqref{sec4 lem2 key} also holds on the segment $
\ell_{a\ |\ a=0}$.

As $\nu$ is a unit vector, hence from \eqref{sec4 lem2 key} we have 
\begin{equation*}
| u\left( p\right) -u\left( q\right) | \leq | p-q | ,\text{ }\forall \text{ }
p,q\in \Delta _{ABC},
\end{equation*}
and equality holds if and only if both $p$ and $q$ lie on a common segment $
\ell _{a}$. This completes the proof.
\end{proof}

As in \cite{CFM, TW}, one can show by Lemma \ref{densities} that there is a
unique measure preserving map $T_{0}$ from $( f,\Delta _{ABC})$ to $(
g,\Delta _{ABC}) $ such that $T_{0}(p) $ and $p$ lie in a common $\ell _{a}$
for all $p\in \Delta _{ABC}$, and satisfies the monotonicity condition 
\begin{equation*}
(T_{0}(p) -T_{0}(q)) \cdot ( p-q ) \geq 0\ \ \forall\ p,q\in \ell _{a} .
\end{equation*}
With the help of and Lemma \ref{potential}, we prove that this $T_{0}$ is
indeed optimal. This fact is classical in optimal transport theory, but we show it in details for the sake of completeness.

\begin{lemma}
\label{optimal}$T_{0}$ is an optimal mapping in the Monge mass
transportation problem from $\left( f,\Delta _{ABC}\right) $ to $\left(
g,\Delta _{ABC}\right) $.
\end{lemma}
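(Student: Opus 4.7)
I will apply Kantorovich duality for the Monge cost $c(x,y)=|x-y|$, using the $1$-Lipschitz potential $u$ from Lemma \ref{potential} as the dual variable. For any $1$-Lipschitz $u$ and any measure-preserving map $S:(\Delta_{ABC},f)\to(\Delta_{ABC},g)$, the pointwise inequality $|x-S(x)|\geq u(x)-u(S(x))$ integrates (after using $S_\#f=g$ to rewrite $\int u(S(x))f(x)\,dx=\int u(y)g(y)\,dy$) to the universal lower bound
\begin{equation*}
\int_{\Delta_{ABC}} |x-S(x)|\,f(x)\,dx\;\geq\;\int_{\Delta_{ABC}} u\,f\,dx\;-\;\int_{\Delta_{ABC}} u\,g\,dy.
\end{equation*}
The right-hand side is independent of $S$, so optimality of $T_0$ reduces to showing that $T_0$ itself realizes equality.

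\textbf{Key steps.} First I would record the universal lower bound above for an arbitrary competitor $S$; this uses only that $u$ is $1$-Lipschitz (Lemma \ref{potential}) and that $f,g$ have the same total mass (Lemma \ref{densities} at $a=1$). Second, I would exhibit the equality for $S=T_0$: since $T_0(x)$ and $x$ always lie on a common segment $\ell_a$ by construction, the equality clause of Lemma \ref{potential} gives $|x-T_0(x)|=|u(x)-u(T_0(x))|$ pointwise. To remove the absolute value I need the sign $u(x)\geq u(T_0(x))$ to hold $f$-a.e.; once that is established, integrating against $f\,dx$ and using $(T_0)_\#f=g$ yields $\int |x-T_0(x)|f\,dx=\int uf-\int ug$, which matches the universal lower bound and thus proves optimality.

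\textbf{Main obstacle.} The only substantive point is verifying the sign $u(x)\geq u(T_0(x))$ on all of $\Delta_{ABC}$ with a single choice of $u$. Since $Du=\nu_a$ is constant along each segment $\ell_a$ by \eqref{sec4 mu}, the difference $u(x)-u(T_0(x))=\nu_a\cdot(x-T_0(x))$ equals $\pm|x-T_0(x)|$ with a sign depending on whether $T_0(x)-x$ is parallel or anti-parallel to $\nu_a$; by the ray-monotonicity of $T_0$ this sign is constant on each ray. To see it is globally consistent I would use the one-dimensional mass-balance along $\ell_a$ imposed by \eqref{sec4 mass balance}: with $f\equiv 1$ and $g=1+\tfrac14 x+\eta(y)$ (with $\eta\geq 0$ by Lemma \ref{densities}) the density $g$ is heavier toward the $Q_a$-end of each ray, so the monotone rearrangement pushes mass from the $P_a$-end toward the $Q_a$-end, i.e.\ in the direction $-\nu_a$, uniformly in $a$. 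Replacing $u$ by $-u$ if necessary (both satisfy Lemma \ref{potential}), this fixes the sign globally and completes the proof. A final remark is that uniqueness of the monotone Monge transport is standard (see \cite{CFM, TW}), so the argument above shows $T_0$ is in fact the monotone optimal map.
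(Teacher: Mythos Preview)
Your approach via Kantorovich duality with the pair $(u,-u)$ is exactly the paper's, and you go further than the paper in explicitly isolating the one nontrivial point: why $u(z)\ge u(T_0(z))$ pointwise (the paper simply writes $u(z)-u(T_0(z))=|z-T_0(z)|$ without comment). But your resolution of that point contains a genuine error. You claim that ``by the ray-monotonicity of $T_0$ this sign is constant on each ray''; this is false. Monotonicity means $T_0$ is order-preserving along $\ell_a$, not that $T_0(x)-x$ has a fixed sign: a monotone one-dimensional transport on $[0,1]$ can send some points to the right and others to the left (take $f\equiv 1$ and any $g$ with $g<1$ near both endpoints and $g>1$ in the middle). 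Your follow-up sentence about $g$ being ``heavier toward the $Q_a$-end'' is the right intuition, but as stated it only indicates the net direction of mass flow, not that every point moves toward $Q_a$; and the ``replace $u$ by $-u$'' device does not help if the sign varies within a single ray.

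The argument that actually closes the gap is specific to this example. Along each $\ell_a$ the difference $f-g=-\tfrac14 x-\eta(y)$ is strictly decreasing in the arc-length parameter (both $x$ and $y$ increase toward $Q_a$, and $\eta$ is nondecreasing by the explicit formula in Lemma~\ref{densities}); it is positive at $P_a$ and negative at $Q_a$, hence changes sign exactly once. Multiplying by the positive Jacobian of the ray decomposition preserves this sign pattern, so together with the ray-wise mass balance (obtained by differentiating \eqref{sec4 mass balance} in $a$) the running integral of the one-dimensional density difference from $P_a$ to any intermediate point $z$ is nonnegative. This forces $T_0(z)\in[z,Q_a]$ for every $z$ on every ray, giving $u(z)-u(T_0(z))=|z-T_0(z)|$ with the specific $u$ of Lemma~\ref{potential} and completing the duality argument.
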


\begin{proof}
Recall that the total cost functional is given by 
\begin{equation*}
\mathcal{C}\left( s\right) =\int_{\Delta _{ABC}}f\left( z\right) | z-s\left(
z\right) | dz,
\end{equation*}
where $s\in \mathcal{S}$, the set of measure preserving maps from $\left(
f,\Delta _{ABC}\right) $ to $\left( g,\Delta _{ABC}\right) $; and the
Kantorovich functional is defined as 
\begin{equation*}
I\left( \psi ,\varphi \right) =\int_{\Delta _{ABC}}f\psi +\int_{\Delta
_{ABC}}g\varphi ,
\end{equation*}
where $\left( \psi ,\varphi \right) $ are function pairs in the set 
\begin{equation*}
\mathcal{K}=\left\{ \psi \left( x\right) +\varphi \left( y\right) \leq | x-y
| \ \ \ \forall\ x, y\ \in \Delta_{ABC}\right\} .
\end{equation*}
For all $s\in \mathcal{S}$ and $\left( \psi ,\varphi \right) \in \mathcal{K}$
, we have
\begin{eqnarray}
I\left( \psi ,\varphi \right) &=&\int_{\Delta _{ABC}}f\left( z\right) \psi
\left( z\right) dz+\int_{\Delta _{ABC}}f\left( z\right) \varphi \left(
s\left( z\right) \right) dz  \label{sec4 temp} \\
&\leq &\int_{\Delta _{ABC}}f\left( z\right) | z-s\left( z\right) | dz  \notag
\\
&=&\mathcal{C}\left( s\right) .  \notag
\end{eqnarray}
That is 
\begin{equation*}
\sup_{\mathcal{K}}I\left( \psi ,\varphi \right) \leq \inf_{\mathcal{S}} 
\mathcal{C}\left( s\right) .
\end{equation*}
Let $u$ be the function constructed in the proof of Lemma \ref{potential},
and let $v=-u$. Then we have $\left( u,v\right) \in \mathcal{K}$. As $
T_{0}(p)$ and $p$ lie on the same line segment, Lemma \ref{potential}
implies 
\begin{equation*}
u\left( z\right) -u\left( T_{0}\left( z\right) \right) =| z-T_{0}\left(
z\right) | .
\end{equation*}
So the inequality in (\ref{sec4 temp}) becomes equality provided $\left(
\psi ,\varphi \right) =\left( u,v\right) $ and $s=T_{0}$. Therefore 
\begin{equation*}
\mathcal{C}\left( T_{0}\right) =I\left( u,v\right) \leq \sup_{\mathcal{K}
}I\left( \psi ,\varphi \right) \leq \inf_{\mathcal{S}}\mathcal{C}\left(
s\right) \text{.}
\end{equation*}
Hence $T_0$ is optimal and the segments $\ell _{a}$ are transfer rays of
Monge's problem.
\end{proof}

Let $B^{\prime }=\left( 1,-4\right) $ be the reflection of the point $B$ in
the $x$-axis and let $\Omega =\Omega ^{\ast }=\Delta _{ABB^{\prime }}$.
Extend the functions $f,g$ to $\Omega $ such that they are symmetric with
respect to the $x$-axis. From the proof of Lemma 5, one sees
that $f,g$ are smooth and satisfy the mass balance condition (\ref{sec2 mass
balance}). The fact that $\eta$ is quadratic close to $0$ shows that it can be reflected as a $C^{2}$ function, and Remark 1 shows that it is indeed smooth. It is also known \cite{MTW} that $\Omega $ and $\Omega ^{\ast }$
are $c$-convex with respect to each other (for the cost function $
c_{\varepsilon }$, $0\leq \varepsilon \leq 1$).

Also extend $T_0$ to $\Omega$ so that it is symmetric with
respect to the $x$-axis. By the uniqueness of monotone optimal mappings \cite
{FM}, $T_0$ is an optimal mapping of Monge's problem from $(f, \Omega)$ to $
(g, \Omega)$.

We claim that $T_{0}$ is not Lipschitz continuous at the point $q_{0}=(-2,0)$
. Let $D_{a,\delta }$ be the strip in $\Delta _{ABC}$ between the segments $
\ell _{a}$ and $\ell _{a+\delta }$, and let $q_{\sigma }=(-2,\sigma )$ be
the intersection of $\ell _{a}$ with the line $\{x=-2\}$, where $\delta
,\sigma >0$ are constants. Let $T_{0}(q_{\sigma })=(x_{\sigma },y_{\sigma })$
. As $T_{0}$ is measure preserving, we have (see the construction of the
optimal mappings in \cite{CFM, TW}) 
\begin{equation*}
\lim_{\delta \rightarrow 0}\;\frac 1\delta \int_{D_{a,\delta }\cap
\{x<-2\}}f(x,y)dxdy=\lim_{\delta \rightarrow 0}\;\frac 1\delta \int_{D_{a,\delta }\cap
\{x<x_{\sigma }\}}g(x,y)dxdy.
\end{equation*}
That is 
\begin{equation*}
\int_{-2-a}^{-2}(x+2+3a)dx=\int_{-2-a}^{x_{\sigma }}(x+2+3a)\Big(1+\frac{1}{
4 }x+\eta \left( \sqrt{a}(x+2+a)\right) \Big)dx.
\end{equation*}
Making the change $t=2+a+x$, we obtain 
\begin{equation*}
\int_{0}^{a}(t+2a)dt=\int_{0}^{x_{\sigma }+2+a}(t+2a)\Big(\frac{1}{2}+\frac{
t-a}{4}+\eta \left( \sqrt{a}\,t\right) \Big)dx
\end{equation*}
Since both $(t-a)$ and $\eta \left( \sqrt{a}\,t\right)$ tend to $0$ when $t,a\to 0$ (recall that $\eta (t)=O(t^{2})$), they are negligible in front of the constant $\frac 12$. This implies that, for small $a$, we should have 
\begin{equation}
x_{\sigma }\geq -2+(\sqrt{5}-2)a.\label{lab by me}
\end{equation}
Indeed, either $x_{\sigma }+2$ does not tend to $0$, in which case \eqref{lab by me} is satisfied, or it tends to $0$, in which case we can write, for small $a$,
\begin{equation*}
\int_{0}^{a}(t+2a)dt\leq \int_{0}^{x_{\sigma }+2+a}\frac 34(t+2a)dx.
\end{equation*}
Computing these integrals explicitly we get exactly the inequality \eqref{lab by me}.

On the other hand, by \eqref{sec4 eg1 segments}, we have $\sigma =a^{3/2}$.
Note that $x (0)=-2$. Hence 
\begin{equation}
\lim_{\sigma \rightarrow 0+}\frac{x(\sigma )-x(0)}{\sigma }\geq \frac{1}{4}
\lim_{\sigma \rightarrow 0+}a^{-1/2}=\infty .  \label{sec4 temp2}
\end{equation}
Our claim follows.

As $q_{0}=(-2,0)$ is an interior point of $\Delta _{ABB^{\prime }}$, we have
thus constructed positive, smooth densities $f,g$, and $c$-convex domains $
\Omega =\Omega ^{\ast }=\Delta _{ABB^{\prime }}$, such that the associated
optimal mapping $T_{0}$ is not Lipschitz at interior points.

As the triangle $\Delta_{ABB^{\prime }}$ is $c$-convex with respect to each
other, the optimal mapping $T_\varepsilon$ is smooth \cite{MTW}. By Lemma 4, one
has $T_{0}=\lim_{\varepsilon\to 0} T_\varepsilon$, and the above example
shows that $T_{\varepsilon }$ is not locally, uniformly Lipschitz continuous
as $\varepsilon\to 0$.

\section{Applications and perspectives}

The regularity problem for the Monge cost is very natural in transport theory and very difficult. For the moment, even the implication $f,g\in C^\infty\impl T_0\in C^0$ in a convex domain is completely open. The transport $T_0$, among all the optimal transports for the cost $|x-y|$ (for which there is no uniqueness), is likely to be the most regular and the easiest to approximate.

The present paper presented a strategy inspired by the previous results introduced in \cite{MTW} to get Lipschitz bounds, i.e. $L^\infty$ bounds on the Jacobian. Yet, it only allows for some partial bounds, and the counter-example of Section 4 shows that a Lipschitz result is not possible. 
However, in the same counter-example, the monotonic transport $T_0$ is a continuous map, and the point where a non-Lipschitz behavior is observed shows anyway the behavior of a $C^{0,\frac 23}$ map. Thus, it is still possible to hope for continuous, or even H\"older, regularity results on $T_0$.

We stress that these results could also be applied to the regularity of the transport density. The transport density is a notion which is specifically associated to the transport problem for the Monge cost (see \cite{FM}): it is a measure $\sigma$ which satisfies 
\begin{equation}\label{MK system}
\begin{cases}
\mathrm{div}\cdot (\sigma D u)=f-g&\mbox{ in }\Omega\\
|D u|\leq 1&\mbox{ in }\Omega,\\
|D u|= 1&\mbox{ a.e. on }\sigma>0,\end{cases}
\end{equation}
together with the Kantorovich potential $u$.

Several weak regularity results have been established, starting from the absolute continuity of $\sigma$ if either $f$ or $g$ are absolutely continuous, till the $L^p$ estimates $f,g\in L^p\impl \sigma\in L^p$ (see \cite{FM,Am,DePEvaPra,DePPra2,simpleproof}).

An explicit formula for $\sigma$ in terms of optimal transport plans or maps is available (we will not develop it here, see \cite{Am}) and most possible strategies for the regularity of the transport density need some continuity of the corresponding optimal transport. Yet, one of the advantages of working with $\sigma$ is that any optimal transport $T$ produces the same density $\sigma$. This allows for choosing the most regular one, for instance $T_0$, but requires anyway some regularity on it. Here is where our analysis comes into play (without, unfortunately, providing any exploitable result). But there are other features of the transport density that one could take advantage of: from the fact that it only depends on the difference $f-g$, one can decide to add any common density to both measures. For instance, if $f$ and $g$ are smooth densities with compact support on $\R^n$, it is always possible to add common background measure on a same convex domain $\Omega$ including both the supports. $\Omega$ can be chosen as smooth as we want, and we can for instance take $\Omega$ to be a ball. Also, one can add another common density to $f$ and $g$ so as to get $g=1$. This last trick allows to avoid some of the tedious computations of Section 3, since in this case $g(T)$ has not to be differentiated.

In any case, even with these simplifications, the continuity result is not  available for the moments. Possible perspectives of the current research involve the use of these partial estimates to prove continuity. 

Among the possible strategies
\begin{itemize}
\item Use the bounds on $DT_\ve$ to get estimates on the directions of the transport rays for the limit problem, and use them to estimate how much the disintegrations of $f$ and $g$ vary according to the rays. Using the fact that the monotonic optimal transport (in one dimension) continuously depends on the measures, one can hope for the continuity of $T_0$.
\item Use the fact that the bound on $W$ gives an $L^\infty$ bound on $\mathrm{div} (L Du)$ and, since $L$ depends on $|Du|$, one faces a highly non-linear and highly degenerate elliptic PDE where the goal would be to get uniform continuity results on $L Du$. This recalls what has been recently done in very degenerate elliptic PDEs for traffic applications (see \cite{SanVes,ColFig}), but seems (much) harder because $LDu$ is not a uniformly continuous function of $Du$. 
\item Write down some elliptic PDEs solved by some scalar quantities associated to $T_\ve$, for instance by $L$, and use the bounds on the matrices $A$ and $w$ that have been proven here in order to apply De Giorgi-Moser arguments (or their wider generalizations, see \cite{Dib} for a complete framework). Should it work, this would give H\"older continuity. Unfortunately, our attempts have not given any useful PDE so far.
\end{itemize}

All in all, up to the two-dimensional result of \cite{FGP} (which requires disjoint and convex supports), the search for continuous optimal transports for the original cost of Monge is still widely open.
\vskip40pt

\end{document}